\newcommand{\address}[1]{\gdef\@address{#1}}
\newcommand{\email}[1]{\gdef\@email{\url{#1}}}
\newcommand{\@endstuff}{\par\vspace{\baselineskip}\noindent\small
\begin{tabular}{@{}l}\@address\\\textit{E-mail address:} \@email\end{tabular}}
\newtheorem{theorem}{Theorem}[section]
\newtheorem{definition}[theorem]{Definition}
\newtheorem{example}[theorem]{Example}
\newtheorem{lemma}[theorem]{Lemma}
\newtheorem{question}[theorem]{Question}
\newtheorem{remark}[theorem]{Remark}
\newtheorem{proposition}[theorem]{Proposition}
\newtheorem{corollary}[theorem]{Corollary}
\newtheorem{assumption}[theorem]{Assumption}
\newcommand{\citep}[1]{\cite{#1}}
\newcommand{\dvol}{\text{\normalfont dvol}}
\newcommand{\supp}{\text{\normalfont supp}}
\newcommand{\ind}{\text{\normalfont ind}}
\newcommand{\even}{\text{\normalfont{even}}}
\newcommand{\odd}{\text{\normalfont{odd}}}
\newcommand{\cptwo}{\mathbb{C}\rm{P}^2}
\newcommand{\professor}{\text{Prof.\ }\hspace{-0.03125mm}}
\newcommand{\doctor}{\text{Dr.\ }\hspace{-0.03125mm}}
\begin{document}
\title{\textbf{Symplectic semi-characteristics}}
\author{Hao Zhuang}
\date{\today}
\maketitle
\address{Beijing International Center for Mathematical Research, Peking University}
\email{hzhuang@pku.edu.cn}
\begin{abstract}
We study the symplectic semi-characteristic of a closed $4n$-dimensional symplectic manifold. First, using the even-degree part of the primitive cohomology, we define the symplectic semi-characteristic. Second, using a vector field with nondegenerate zero points, we prove a counting formula for the symplectic semi-characteristic. As corollaries of the counting formula, we obtain a vanishing property and the fact that the definition of the symplectic semi-characteristic is independent of the choices of symplectic forms. 
\end{abstract}

\tableofcontents
\section{Introduction}
In three consecutive papers \cite[(3.14), (3.22)]{tty1st}, \cite[(1.5), (1.6)]{tty2nd}, and \cite[(1.2), Theorem 3.1]{tty3rd}, Tsai, Tseng, and Yau introduced the $p$-filtered cohomology groups $F^pH^k(M,\omega)\ (0 \leqslant k\leqslant 1+2p+\dim M)$  of any symplectic manifold $(M,\omega)$. The $p = 0$ case in \cite[(3.14), (3.22)]{tty1st} and \cite[(1.5), (1.6)]{tty2nd} is called the primitive cohomology of $(M,\omega)$, and the $p\geqslant 1$ case in \cite[(1.2), Theorem 3.1]{tty3rd} is generalized from the primitive cohomology. Different from the classical de Rham cohomology, this $p$-filtered cohomology includes the information of the symplectic form $\omega$. Thus, an important application of the $p$-filtered cohomology is to distinguish different symplectic structures. For the examples of this application, see \cite[Section 4]{tty2nd} (by computing the primitive cohomology groups) and \cite[Section 6]{tty3rd} (by computing the product structures). In addition, Tanaka and Tseng \cite[Theorem 1.1]{tanaka_tseng_2018} proved that the mapping cone complex determined by the map $\wedge\omega^{p+1}$ between de Rham complexes computes the $p$-filtered cohomology.

In this paper, we focus on the $p = 0$ part, the primitive cohomology. Our work starts from an interesting fact about the primitive cohomology: When the symplectic manifold $(M,\omega)$ is closed, the Euler characteristic 
\begin{align}\label{primitive euler}
    \dim F^0H^{\text{even}}(M,\omega) - \dim F^0H^{\text{odd}}(M,\omega)
\end{align}
of the primitive cohomology is equal to zero. This fact was originally proved in \cite[Proposition 3.26]{tty1st} and \cite[Proposition 3.7]{tty2nd} by verifying the duality between cohomology groups. Recently, using Tanaka and Tseng's mapping cone complex, Clausen, Tang, and Tseng proved the symplectic Morse inequality \cite[Theorem 1.4]{tangtsengclausensymplecticwitten}, also showing (\ref{primitive euler}) is zero. Intuitively speaking, we may say that the even-degree part of the primitive cohomology contains the same amount of information as the odd-degree part. 

We then ask whether $F^0H^{\text{even}}(M,\omega)$ is an obstruction to some geometric object. Here, to some extent, we are motivated by the Kervaire semi-characteristic in a similar scenario. Recall that for any odd-dimensional closed oriented manifold $N$, the Euler characteristic of the de Rham cohomology of $N$ is zero. Let $b_k$ be the dimension of the $k$-th de Rham cohomology group of $N$. The $\mathbb{Z}_2$-valued Kervaire semi-characteristic (See \cite[Introduction]{kervaireoriginalpaper}, \cite[Section 1]{lusztig}, and \cite[Section 4]{atiyah2013vector}) of $N$ is defined to be
\begin{align}\label{classical kervaire semi definition}
    \sum_{k\text{\ is even}}b_k \mod 2.
\end{align}
When $\dim N = 4n+1$, the $\mathbb{Z}_2$-valued Kervaire semi-characteristic satisfies Atiyah's vanishing theorem \cite[Theorem 4.1]{atiyah2013vector} and Zhang's counting formula \cite[Theorem 1.3]{zhangcountingmod2indexkervairesemi}. 
Both the vanishing theorem and the counting formula involve two vector fields, showing that the $\mathbb{Z}_2$-valued Kervaire semi-characteristic is an obstruction to a certain pair of vector fields. Now, back to our symplectic situation, we precisely state our main question: 
\begin{question}\label{main question}
\normalfont
    For any closed symplectic manifold $(M,\omega)$, which geometric object on $M$ does the even-degree part of the primitive cohomology of $(M,\omega)$ obstruct? Is the geometric object a pair of vector fields or something else?
\end{question}

We give an answer to Question \ref{main question} for $4n$-dimensional closed symplectic manifolds. 

\begin{assumption}\label{only assumption}
\normalfont
    Throughout this paper, when there is no particular clarification, $(M,\omega)$ means a $4n$-dimensional closed symplectic manifold $M$ equipped with a symplectic form $\omega$. 
\end{assumption}

We recall Tanaka and Tseng's mapping cone complex $(C^*(M,\omega), \partial_C)$ that computes the primitive cohomology of $(M,\omega)$. Let $\Omega^k(M)$ be the space of smooth $k$-forms on $M$. According to \cite[Section 3.1]{tanaka_tseng_2018} and \cite[Definition 1.1]{tangtsengclausensymplecticwitten}, the space of $k$-cochains is $$C^k(M,\omega) \coloneqq \Omega^k(M)\oplus\Omega^{k-1}(M)\ \  (k = 0,1,\cdots,4n+1).$$ Let $d$ be the de Rham exterior differentiation. The boundary map is 
\begin{align*}
\begin{split}
    \partial_C: C^k(M,\omega)&\to C^{k+1}(M,\omega)\\
   \begin{bmatrix}
       \alpha\\
       \beta
   \end{bmatrix}&\mapsto \begin{bmatrix}
    d & \omega\\
    0 & -d
\end{bmatrix}\begin{bmatrix}
    \alpha\\
    \beta
\end{bmatrix} = \begin{bmatrix}
    d\alpha+\omega\wedge\beta\\
    -d\beta
\end{bmatrix}.
\end{split}
\end{align*}
Here, we write the pair $(\alpha,\beta)\in\Omega^k(M)\oplus\Omega^{k-1}(M)$ as a column for the convenience of using matrices and operators later.

The $k$-th cohomology group of $(C^*(M,\omega),\partial_C)$ is exactly $F^0H^k(M,\omega)$. Let $b^\omega_{k}$ be the dimension of $F^0H^k(M,\omega)$. We define the symplectic semi-characteristic of $(M,\omega)$ as follows. 

\begin{definition}\label{definition of symp semi char}
\normalfont
    We call the $\mathbb{Z}_2$-valued number
    \begin{align}\label{symplectic semi definition we need it}
        \kappa(M,\omega)\coloneqq \sum_{k\text{\ is even}}b_{k}^\omega \mod 2
    \end{align}
    the symplectic semi-characteristic of $(M,\omega)$. 
\end{definition}

    To state our main result, we review the definition of nondegenerate vector fields. Let $V$ be a smooth vector field on $M$. Then, following \cite[Section 1.6]{bgv}, at each zero point $p$ of $V$, we define a homomorphism
\begin{align*}
    \Phi_p: T_pM&\to T_pM\\
    v&\mapsto [V,\tilde{v}](p),
\end{align*}
where $\tilde{v}$ is a vector field extending the tangent vector $v$, and $[\cdot,\cdot]$ is the Lie bracket between vector fields. This $\Phi_p$ is independent of the extension $\tilde{v}$ since $V$ equals zero at $p$. 

\begin{definition}\normalfont
    A smooth vector field $V$ on $M$ is called nondegenerate if either $V$ is nonvanishing, or $\Phi_p$ is invertible for each zero point $p$ of $V$.  
\end{definition}

Such a nondegenerate vector field always exists because by \cite[Theorem 6.6]{milnor1963morse}, there is always a Morse function on $M$. 
Now, our main result is as follows (cf. \cite[Theorem 1.3]{zhangcountingmod2indexkervairesemi}). 

\begin{theorem}\label{counting formula theorem}
    Let $V$ be a smooth nondegenerate vector field on $M$. Then, 
    \begin{align}\label{main result main formula}
        \kappa(M,\omega) = \text{the number of zero points of\ } V \mod 2
    \end{align}
    is the counting formula for the symplectic semi-characteristic under Assumption \ref{only assumption}.
\end{theorem}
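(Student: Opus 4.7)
The plan is to express $k(M,\omega)$ as the mod-$2$ dimension of the kernel of a suitable elliptic self-adjoint operator built from $(\partial_C,\partial_C^*)$, to prove that this mod-$2$ kernel dimension is invariant under a natural family of zeroth-order perturbations, to deform by $V$ in order to localize the kernel near the zero set of $V$, and finally to compute the local contribution at each non-degenerate zero to be $1\mod 2$. The overall shape imitates Zhang's approach to the Kervaire semi-characteristic, with the mapping cone complex and $\omega$ itself substituting for the de Rham complex and for Zhang's auxiliary reference vector field.

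First I would fix an $\omega$-compatible Riemannian metric on $M$, form the formal adjoint $\partial_C^*$, and introduce the Hodge Laplacian $\Delta_C=\partial_C\partial_C^*+\partial_C^*\partial_C$ on $C^*(M,\omega)$. Standard Hodge theory for the mapping cone gives $b_k^\omega=\dim\ker(\Delta_C|_{C^k(M,\omega)})$, hence
\[
k(M,\omega)\;=\;\dim\ker\bigl(\Delta_C|_{C^{\even}(M,\omega)}\bigr)\mod 2.
\]
The right-hand side is already analytic, but to apply a deformation argument I would repackage it as a $\mathbb{Z}_2$-graded Fredholm-type invariant: bundle the even/odd splitting of $C^*$ together with $\partial_C+\partial_C^*$ into a Clifford-type framework in which the mod-$2$ kernel dimension becomes a well-defined homotopy invariant of the operator. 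The vanishing $\chi(C^*)=0$, which forces the even and odd harmonic cochains to have equal dimension, is what makes such a framework available.

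Next I would introduce a Witten-type deformation $\mathscr{D}_t=\partial_C+\partial_C^*+t\,c(V)$ on $C^*(M,\omega)$, where $c(V)$ is a zeroth-order algebraic endomorphism built from $V$ (combining interior multiplication by $V$ with wedge by its metric dual, respecting the mapping cone block structure). As in Witten's original analysis and in Clausen–Tang–Tseng's use of a Morse function to compute $\chi(C^*)$, the squared operator $\mathscr{D}_t^2$ acquires a confining potential proportional to $t^2|V|^2$, so as $t\to\infty$ all low-lying eigenmodes concentrate on arbitrarily small tubes about the zero set of $V$. Combined with the deformation invariance above, this rewrites $k(M,\omega)$ as a sum $\sum_{p}n_p\mod 2$ of local mod-$2$ contributions, one at each zero $p$ of $V$.

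The main obstacle is the local computation of $n_p$ at a non-degenerate zero $p$. In linearizing coordinates, $V$ is replaced by $\Phi_p$ acting linearly on $T_pM$ and $\omega$ by the constant symplectic form $\omega_p$; the model operator becomes a harmonic-oscillator-type operator on the Schwartz space of $T_pM$, acting on the constant-coefficient mapping cone complex $C^*(T_pM,\omega_p)$. The heart of the argument is to show that this model operator has an even-degree kernel of odd dimension, \emph{independently of the eigenvalue signs of $\Phi_p$}. I expect this to follow from explicit harmonic oscillator diagonalization, in which the $\omega_p\wedge$ coupling pairs the ``positive'' and ``negative'' oscillator modes within each cochain degree and leaves exactly one unpaired even-degree ground state. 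It is precisely this uniformity $n_p=1$, absent from the Kervaire case, that allows the symplectic counting formula to read off the number of zeros without any auxiliary reference vector field. Once $n_p=1$ is established, summing over zeros yields the theorem, and the Atiyah-type vanishing when $V$ is nowhere zero follows immediately.
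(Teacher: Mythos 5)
Your outline follows the same broad strategy as the paper (recast $k(M,\omega)$ as a mod~2 index, Witten-deform by the vector field, localize at the zeros), but the two pillars that make this strategy work are missing, and one of your justifications is actually false. The deformation-invariance step has no foundation as you set it up: for a self-adjoint operator such as $\Delta_C$ or $\partial_C+\partial_C^*$, the mod~2 kernel dimension is \emph{not} a homotopy invariant (a single eigenvalue can cross zero), and your proposed deformation $\partial_C+\partial_C^*+t\,c(V)$ is neither self-adjoint nor skew-adjoint. What is needed is a genuinely real elliptic \emph{skew-adjoint} operator whose kernel is the even-degree harmonic cochain space; the paper builds it by composing the block operator $\begin{bmatrix} d+d^* & \omega\\ \omega^* & -d-d^*\end{bmatrix}$ with $\hat{c}(\dvol)$ and a block swap, and the skew-adjointness rests on the identities $\hat{c}(\dvol)\alpha=(-1)^{k(k+1)/2}\star\alpha$ and $\hat{c}(\dvol)\,\omega^*\lrcorner=-\,\omega\wedge\hat{c}(\dvol)$, which use $\dim M=4n$. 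Your stated reason that such a framework exists --- that $\chi(C^*)=0$ forces even and odd harmonic cochains to have equal dimension --- cannot be the mechanism: that vanishing holds for closed symplectic manifolds of every dimension, yet the counting formula fails in dimension $4n+2$ (on $\mathbb{T}^2$ one has $k(M,\omega)=1$ although a nonvanishing vector field exists, cf.\ the paper's last example). Since your sketch never invokes $\dim M=4n$, it would ``prove'' a false statement; the dimension hypothesis must enter exactly at this construction, and likewise the deformation term must be inserted so that the deformed operator $\mathbb{D}_T$ (with $T\hat{c}(X)$ in the off-diagonal blocks) stays skew-adjoint.

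The localization step is also not correct as sketched. Concentration of low-lying eigenmodes does not rewrite $\dim\ker\mathbb{D}_T \bmod 2$ as a sum of local kernel dimensions $n_p$: kernels do not localize, only the low-eigenvalue spectral subspace $\widetilde{E}_T$ does, and approximate kernel elements may correspond to small nonzero eigenvalues. In the paper the local model contributes a \emph{one-dimensional approximate} kernel spanned by $[\rho_p,\eta_p]$ or $[\eta_p,\rho_p]$, where $\rho$ is the Gaussian ground state of the local Witten Laplacian whose form parity is determined by the sign of $\det A$ (so the uniformity ``one dimension per zero, regardless of signs'' comes from the two-slot mapping cone structure, not from $\omega_p$ pairing oscillator modes), and $\eta$ is the correction solving $\tfrac12(\omega_0^*-\omega_0)\rho=(d+d^*+T\hat{c}(X_0))\eta$ with $\|\eta\|=O(T^{-1/2})\|\rho\|$; these are not exact kernel elements, and no claim about an ``odd-dimensional even-degree kernel'' of the model is made or needed. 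The missing idea that closes the argument is the finite-dimensional parity trick: $\mathbb{D}_T$ preserves $\widetilde{E}_T$, its restriction there is a real skew-symmetric matrix, hence has even rank, so $\dim\ker\mathbb{D}_T\equiv\dim\widetilde{E}_T\pmod 2$, and the spectral-gap estimates identify $\dim\widetilde{E}_T$ with the number of zeros. Without the skew-adjoint structure and this parity step, your passage from ``modes concentrate near the zeros'' and ``$n_p=1$'' to the counting formula does not follow.
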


\begin{remark}\normalfont
    By the Poincar\'e-Hopf index formula \cite[Theorem 4.5]{wittendeformationweipingzhang}, formula (\ref{main result main formula}) is also mod 2 to the Euler characteristic of the de Rham cohomology of $M$. 
\end{remark}

\begin{remark}
\normalfont
    A special situation is when the de Rham cohomology class of $\omega$ is integral. In this situation, by \cite[Theorem 7.1]{tanaka_tseng_2018}, Theorem \ref{counting formula theorem} computes the classical Kervaire semi-characteristic (\ref{classical kervaire semi definition}) of the circle bundle over $M$ induced by the line bundle associated with $\omega$. 
\end{remark}
The main idea of the proof is, we find a skew-adjoint operator like Zhang's construction \cite[(1.1)]{zhangcountingmod2indexkervairesemi}. Then, we show that $\kappa(M,\omega)$ is equal to the Atiyah-Singer mod 2 index (See Definition \ref{atiyah-singer mod 2 index def}) of this operator. Afterwards, like \cite[(2.1)]{zhangcountingmod2indexkervairesemi}, we apply the Witten deformation and the Bismut-Lebeau asymptotic analysis (see \cite[Section 2]{witten}, \cite[Chapter VIII-X]{bismutandlebeau}, and \cite[Chapters 4-7]{wittendeformationweipingzhang}) to the operator, compute its mod 2 index, and then obtain Theorem \ref{counting formula theorem}. 

A corollary of Theorem \ref{counting formula theorem} is an Atiyah type vanishing property (cf. \cite[Theorem 4.1]{atiyah2013vector}): 
\begin{corollary}\label{vanishing theorem}
    The semi-characteristic $\kappa(M,\omega) = 0$ when there is a nonvanishing smooth vector field on $M$.
\end{corollary}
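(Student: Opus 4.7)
The plan is to deduce this directly from Theorem \ref{counting formula theorem} applied to the given nonvanishing vector field. First I would observe that, by the definition of a nondegenerate vector field, any nonvanishing smooth vector field $V$ on $M$ qualifies automatically (the first alternative in the definition is satisfied vacuously by the invertibility requirement, since there are no zero points at which to check $\Phi_p$). Thus the hypothesis of Theorem \ref{counting formula theorem} is met.

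Next, I would apply the counting formula \eqref{main result main formula} to this $V$. The right-hand side counts the number of zero points of $V$ modulo $2$, which is $0$ because $V$ has no zero points at all. Hence $k(M,\omega) \equiv 0 \pmod{2}$, as claimed.

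There is no real obstacle here: the content lies entirely in Theorem \ref{counting formula theorem}, and the corollary is a one-line specialization. The only subtle point worth spelling out is the logical structure of the nondegeneracy definition, namely that the condition on $\Phi_p$ is imposed only at zero points, so a nonvanishing vector field is nondegenerate by default. Once that is noted, the proof is immediate.
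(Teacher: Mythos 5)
Your proposal is correct and is exactly the paper's own argument: a nonvanishing vector field is nondegenerate by definition, so Theorem \ref{counting formula theorem} applies with zero zero points, giving $k(M,\omega)=0$. (The paper also sketches an alternative direct proof via Atiyah's perturbation trick in Remark \ref{atiyah trick}, but that is not needed for your route.)
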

Another way to prove Corollary \ref{vanishing theorem} without using Theorem \ref{counting formula theorem} is in Remark \ref{atiyah trick}. 

By Example \ref{weak replacement of the euler char}, the opposite direction of Corollary \ref{vanishing theorem} is not true. This is different from the Euler characteristic of the de Rham cohomology of $M$. 

In addition, although we use the symplectic form $\omega$ to define $\kappa(M,\omega)$, the nondegenerate vector field always exists and is independent of the symplectic structure. Thus, we have: 
\begin{corollary}\label{independence result}
   The definition of  $\kappa(M,\omega)$ is independent of the chosen symplectic form. 
\end{corollary}

\begin{remark}\normalfont
    The definition of the symplectic semi-characteristic can be assigned to any closed symplectic manifold without assuming that $\dim M = 4n$. However, by Example \ref{counter example 2}, the counting formula does not work when the dimension is $4n+2$. Thus, the $(4n+2)$-dimensional part of Question \ref{main question} is still open. 
\end{remark}

This paper is organized as follows. In Section \ref{clifford actions and operators section}, we review Clifford actions and find a skew-adjoint operator so that $\kappa(M,\omega)$ equals the Atiyah-Singer mod 2 index of this operator. In Section \ref{Symplectic Witten deformation section}, we carry out necessary analytic details about this operator. In Section \ref{Counting formula section}, we prove Theorem \ref{counting formula theorem} based on these analytic details. In Section \ref{Examples section}, we give some examples and further discussions. 

\vspace{+3mm}
\noindent\textbf{Acknowledgments}. I want to thank my PhD supervisor \professor Xiang Tang for introducing me the analytic method in the symplectic Morse theory and offering practical suggestions to simplify several steps. Also, I want to thank \professor Li-Sheng Tseng for mentioning the relations between mapping cone complexes and odd sphere bundles, and \doctor David Clausen for his talk on the symplectic Morse inequality. Meanwhile, I want to thank \professor Yi Lin for pointing out the influences of the K\"ahler condition, and \professor Christopher Seaton for asking about the relations between the classical Euler characteristic and the symplectic semi-characteristic. Furthermore, I want to thank the anonymous referees for the helpful comments and suggestions on mathematics and writing. Finally, I want to thank Washington University in St. Louis for supporting my PhD study during the completion of this project. 

\section{Clifford actions and operators}\label{clifford actions and operators section}
In this section, we clarify technical details about the Clifford actions of tangent vectors. Also, we give the skew-adjoint operator that we will work with. 

After choosing an almost complex structure $J$ on $M$, we let $g$ be the Riemannian metric
$$g(\cdot,\cdot) = \omega(\cdot,J\cdot)$$ 
on $M$. Then, we equip $M$ with the orientation $\omega\wedge\cdots\wedge\omega$ and let $\star$ be the Hodge star operator. Let $\dvol = \star 1$ be the volume form of $M$, we define the $L^2$-norm (also, the inner product)
\begin{align}\label{L2 on smooth}
    \|\alpha\| = \left(\int_M g(\alpha,\alpha)\dvol\right)^{1/2}
\end{align}
on $\Omega^k(M)$. 
We require $\Omega^k(M)\perp \Omega^{\ell}(M)$ when $k\neq\ell$. For the pairs of forms, following \cite[(2.2)]{tangtsengclausensymplecticwitten}, on $C^k(M,\omega) = \Omega^k(M)\oplus\Omega^{k-1}(M)$, we define 
\begin{align*}
    g\left(\begin{bmatrix}
        \alpha\\
        \beta
    \end{bmatrix},\begin{bmatrix}
        \alpha'\\
        \beta'
    \end{bmatrix}\right) = g(\alpha,\alpha')+g(\beta,\beta'). 
\end{align*}
Like (\ref{L2 on smooth}), we have the $L^2$-norm (also, the inner product)
\begin{align}\label{L2 on pair}
    \left\|\begin{bmatrix}
    \alpha\\
    \beta
\end{bmatrix}\right\| = \left(\int_M g(\alpha,\alpha)\dvol+\int_M g(\beta,\beta)\dvol\right)^{1/2}
\end{align}
on $C^k(M,\omega)$. We require $C^k(M,\omega)\perp C^{\ell}(M,\omega)$ when $k\neq\ell$. 
In addition, we let $d^*$ be the formal adjoint of $d$ with respect to the inner product induced by (\ref{L2 on smooth}), and $$\omega^*\lrcorner: \Omega^k(M)\to\Omega^{k-2}(M)$$ be the adjoint of 
\begin{align*}
\omega\wedge: \Omega^k(M)&\to\Omega^{k+2}(M)\\
    \alpha&\mapsto \omega\wedge\alpha
\end{align*}
with respect to the same inner product. For convenience, we will omit the ``$\lrcorner$'' behind $\omega^*$ and the ``$\wedge$'' behind $\omega$ when there is no ambiguity. Recall the mapping cone complex 
\begin{align}\label{the mapping cone complex by partial c}
\begin{split}
    \partial_C: \Omega^k(M)\oplus\Omega^{k-1}(M)&\to\Omega^{k+1}(M)\oplus\Omega^k(M)\\
    (\alpha,\beta)&\mapsto \begin{bmatrix}
        d & \omega\\
        0 & -d
    \end{bmatrix}\begin{bmatrix}
        \alpha\\
        \beta
    \end{bmatrix}. 
\end{split}
\end{align}
The formal adjoint of $\partial_C$ is 
$$\partial_C^* = \begin{bmatrix}
    d^* & 0\\
    \omega^* & -d^*
\end{bmatrix}$$
with respect to the inner product induced by (\ref{L2 on pair}).

\begin{proposition}\label{proposition hodge theorem}
    The kernel of the Dirac type operator 
$\partial_C+\partial_C^*$
(also, the kernel of the Laplacian $(\partial_C+\partial_C^*)^2$) is isomorphic to the primitive cohomology of $(M,\omega)$. In particular, 
$$\ker\left((\partial_C+\partial_C^*)^2: C^k(M,\omega)\to C^k(M,\omega)\right)$$
is isomorphic to the $k$-th primitive cohomology group. 
\end{proposition}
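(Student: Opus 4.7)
The plan is to run a standard Hodge decomposition argument for the elliptic complex $(C^*(M,\omega),\partial_C)$ and then invoke the theorem of Tanaka and Tseng \cite{tanaka_tseng_2018} identifying the mapping cone cohomology with the primitive cohomology.

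First I would verify that $(C^*(M,\omega),\partial_C)$ is a genuine cochain complex. A block-matrix computation reduces $\partial_C^2=0$ to $d^2=0$ together with $d\omega=0$: the term produced by $d(\omega\wedge\beta)=d\omega\wedge\beta+\omega\wedge d\beta$ cancels against $\omega\wedge(-d\beta)$ precisely because $\omega$ is closed. Next I would check that $\partial_C+\partial_C^*$ is elliptic. Its principal symbol coincides with that of the block-diagonal operator $(d+d^*)\oplus(d+d^*)$, since both $\omega\wedge$ and $\omega^*\lrcorner$ are of order zero. Since $d+d^*$ is elliptic, so is $\partial_C+\partial_C^*$, and hence its square
$$
\Delta_C \coloneqq (\partial_C+\partial_C^*)^2 = \partial_C^*\partial_C+\partial_C\partial_C^*
$$
is a grading-preserving second order elliptic operator on $C^*(M,\omega)$.

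With ellipticity in hand, I would apply the standard Hodge decomposition for elliptic complexes on a closed manifold to obtain an $L^2$-orthogonal splitting
$$
C^k(M,\omega) \;=\; \ker\Delta_C|_{C^k}\;\oplus\;\image\partial_C\;\oplus\;\image\partial_C^*.
$$
Projection onto harmonic sections then yields an isomorphism $\ker\Delta_C|_{C^k}\cong H^k(C^*(M,\omega),\partial_C)$. The identity $\ker(\partial_C+\partial_C^*)=\ker\Delta_C$ is immediate on smooth sections, since pairing $\Delta_C s$ with $s$ forces $\|\partial_C s\|^2+\|\partial_C^* s\|^2=0$.

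Finally, I would invoke \cite[Section 3.1]{tanaka_tseng_2018} to identify $H^k(C^*(M,\omega),\partial_C)$ with the $k$-th primitive cohomology group of $(M,\omega)$, which gives the stated degreewise isomorphism. The main obstacle I anticipate is bookkeeping rather than analysis: one must keep careful track of the degree shift $C^k=\Omega^k\oplus\Omega^{k-1}$ and the sign $-d$ in the lower right entry when verifying both $\partial_C^2=0$ and that the stated $\partial_C^*$ really is the formal adjoint with respect to (\ref{L2 on pair}). Once those routine checks are in place, the conclusion follows from the elliptic Hodge theorem.
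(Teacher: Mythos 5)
Your proposal is correct and follows essentially the same route as the paper, which simply cites the Hodge theorem for elliptic complexes together with the expression of $(\partial_C+\partial_C^*)^2$ from \cite{tangtsengclausensymplecticwitten} and the Tanaka--Tseng identification of the cone cohomology with primitive cohomology. The only difference is that you spell out the routine checks ($\partial_C^2=0$ via $d\omega=0$, ellipticity via the order-zero nature of $\omega\wedge$ and $\omega^*\lrcorner$) that the paper leaves to the references.
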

\begin{proof}
    We can check that (\ref{the mapping cone complex by partial c}) defines an elliptic complex \cite[Definition 10.4.28]{nicolaescu2020lectures}. By the properties \cite[Theorem 10.4.30]{nicolaescu2020lectures} of an elliptic complex, the complex defined by (\ref{the mapping cone complex by partial c}) satisfies the Hodge decomposition theorem. Thus, the kernel of $(\partial_C+\partial_C^*)^2|_{C^k(M,\omega)}$ is isomorphic to the $k$-th primitive cohomology group. 
\end{proof}

For any (globally or locally defined) vector field $Y$ on $M$, we have two Clifford actions $$\hat{c}(Y) = Y^*\wedge+Y\lrcorner \text{\ \ and\ \ } c(Y) = Y^*\wedge - Y\lrcorner.$$ 
Given any oriented local orthonormal frame $e_1,\cdots,e_{4n}$ of $TM$, the Clifford action of the volume form $\dvol$ is expressed as  
$$\hat{c}(\dvol) = \hat{c}(e_1)\cdots\hat{c}(e_{4n}).$$ 
This $\hat{c}(\dvol)$ is independent of the choices of oriented local orthonormal frames. Following \cite[Section 3]{atiyah2013vector}, we verify some interactions between the Hodge star, Clifford actions, and differential forms. Recall that the dimension of $M$ is $4n$. 
\begin{lemma}\label{lemma clifford 1}
    For all $\alpha\in\Omega^k(M)$, we have 
    $\hat{c}(\dvol)\alpha = (-1)^{k(k+1)/2}\star\alpha$. 
\end{lemma}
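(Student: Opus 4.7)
Both $\hat{c}(\dvol)$ and $\star$ are $C^\infty(M)$-linear endomorphisms of $\Lambda^*T^*M$, so the identity is fiberwise. At an arbitrary point $p\in M$, I would fix an oriented orthonormal basis $e_1,\ldots,e_{4n}$ of $T_pM$ with dual coframe $e^1,\ldots,e^{4n}$, so that $\dvol|_p = e^1\wedge\cdots\wedge e^{4n}$, and by multilinearity reduce to verifying the formula on a basic form $\alpha = e^{i_1}\wedge\cdots\wedge e^{i_k}$ with $i_1<\cdots<i_k$.

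The Clifford relations $\hat{c}(e_i)\hat{c}(e_j)+\hat{c}(e_j)\hat{c}(e_i) = 2\delta_{ij}$ force distinct $\hat{c}(e_i)$'s to anticommute, so I can reorder the product $\hat{c}(e_1)\cdots\hat{c}(e_{4n})$ to place the factors labeled by $\{i_1,\ldots,i_k\}$ first, followed by those labeled by the complement $\{j_1<\cdots<j_{4n-k}\}$; this costs a shuffle sign $\mathrm{sgn}(\sigma)$, which is precisely the sign appearing in the standard formula $\star\alpha = \mathrm{sgn}(\sigma)\,e^{j_1}\wedge\cdots\wedge e^{j_{4n-k}}$. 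The two shuffle signs will therefore cancel, and the problem reduces to evaluating the single aligned product $\hat{c}(e_{i_1})\cdots\hat{c}(e_{i_k})\hat{c}(e_{j_1})\cdots\hat{c}(e_{j_{4n-k}})\,\alpha$.

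Applying operators from right to left on $\alpha$, each $\hat{c}(e_{j_r})$ encounters no copy of $e^{j_r}$ in the current form and wedges it on the left, building up $e^{j_1}\wedge\cdots\wedge e^{j_{4n-k}}\wedge\alpha$ after $4n-k$ steps. The remaining operators $\hat{c}(e_{i_r})$ then each find their covectors already present at the right end and act by interior multiplication, successively stripping $e^{i_k}, e^{i_{k-1}},\ldots,e^{i_1}$. Each of these $k$ contractions contributes a sign determined by the slot of the covector being removed, and a direct computation — using that $4n$ is even — collapses the product of these signs to $(-1)^{k(k+1)/2}$, matching the claim. The only genuine difficulty in the argument is this last sign bookkeeping; everything else is formal manipulation of the Clifford anticommutation relations together with the definition of $\star$.
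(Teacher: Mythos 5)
Your argument is correct and follows essentially the same route as the paper's proof: reduce to a basic monomial $e^{i_1}\wedge\cdots\wedge e^{i_k}$, reorder the Clifford product $\hat{c}(e_1)\cdots\hat{c}(e_{4n})$ to split along the index set and its complement, and track the wedge/contraction signs, using that $4n$ is even to land on $(-1)^{k(k+1)/2}$. The only cosmetic differences are that the paper absorbs the shuffle sign by choosing the complementary indices so that $e_{i_1}^*\wedge\cdots\wedge e_{j_{4n-k}}^*$ is the volume form and anticommutes the two blocks (sign $(-1)^{k(4n-k)}$) before applying them, whereas you keep the aligned order, let the $j$-factors wedge first and the $i$-factors contract afterwards, and cancel the shuffle sign against the one in $\star\alpha$; both bookkeepings give the same result.
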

\begin{proof}
  Let $e_1,\cdots,e_{4n}$ be an oriented local orthonormal frame. Suppose $\alpha = e_{i_1}^*\wedge\cdots\wedge e_{i_k}^*$ such that 
    $$e_{i_1}^*\wedge\cdots\wedge e_{i_k}^*\wedge e_{j_1}^*\wedge\cdots\wedge e_{j_{4n-k}}^* = e_1^*\wedge\cdots\wedge e_{4n}^*. $$
    Then, we have 
    \begin{align*}
         \hat{c}(\dvol)\alpha 
        =\ & \hat{c}(e_{i_1})\hat{c}(e_{i_2})\cdots\hat{c}(e_{i_k})\hat{c}(e_{j_1})\cdots\hat{c}(e_{j_{4n-k}})\alpha\\
        =\ & (-1)^{k(4n-k)}\hat{c}(e_{j_1})\cdots\hat{c}(e_{j_{4n-1}})\hat{c}(e_{i_1})\hat{c}(e_{i_2})\cdots\hat{c}(e_{i_k})\alpha\\
        =\ & (-1)^{k(4n-k)+\frac{(0+k-1)k}{2}}\star(e_{i_1}^*\wedge\cdots\wedge e_{i_k}^*)\\
        =\ & (-1)^{k(k+1)/2}\star\alpha.
    \end{align*}
    The general case of $\alpha$ is straightforward. 
\end{proof}

\begin{lemma}\label{lemma clifford 2}
    For all $\alpha\in\Omega^k(M)$, we have $\hat{c}(\dvol)(\omega^*\lrcorner\alpha) = -\omega\wedge\hat{c}(\dvol)\alpha$. 
\end{lemma}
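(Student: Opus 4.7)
The plan is to reduce the claim to a Hodge-star identity via Lemma~\ref{lemma clifford 1}, then verify the resulting sign.

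First, since $\omega^{*}\lrcorner\alpha\in\Omega^{k-2}(M)$ and $\alpha\in\Omega^{k}(M)$, applying Lemma~\ref{lemma clifford 1} to both sides rewrites the desired equation as
\[
(-1)^{(k-2)(k-1)/2}\,\star\!\bigl(\omega^{*}\lrcorner\alpha\bigr) \;=\; -(-1)^{k(k+1)/2}\,\omega\wedge\star\alpha.
\]
So it suffices to establish the intertwining relation $\star(\omega^{*}\lrcorner\alpha)=\omega\wedge\star\alpha$ and then check the accompanying sign.

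Second, for the intertwining relation I would use the definition of $\omega^{*}\lrcorner$ as the $L^{2}$-adjoint of $\omega\wedge$. For any test form $\beta\in\Omega^{k-2}(M)$,
\[
\langle \omega\wedge\beta,\,\alpha\rangle \;=\; \int_{M}(\omega\wedge\beta)\wedge\star\alpha \;=\; \int_{M}\beta\wedge\bigl(\omega\wedge\star\alpha\bigr),
\]
where the second equality uses $\omega\wedge\beta = \beta\wedge\omega$ (since $\omega$ has even degree). On the other hand $\langle\beta,\,\omega^{*}\lrcorner\alpha\rangle=\int_{M}\beta\wedge\star(\omega^{*}\lrcorner\alpha)$. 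Equating these integrals for every $\beta$ forces $\omega\wedge\star\alpha=\star(\omega^{*}\lrcorner\alpha)$.

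Third, I would verify the sign by direct computation:
\[
\frac{(k-2)(k-1)}{2}-\frac{k(k+1)}{2} \;=\; \frac{(k^{2}-3k+2)-(k^{2}+k)}{2} \;=\; 1-2k,
\]
which is always odd. Hence $(-1)^{(k-2)(k-1)/2}=-(-1)^{k(k+1)/2}$, and combining this with the intertwining relation yields the claim. The proof is essentially bookkeeping; the only real ``obstacle'' is keeping the degree-dependent Hodge-star signs consistent throughout.
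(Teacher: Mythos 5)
Your proof is correct. It follows the same overall reduction as the paper -- apply Lemma \ref{lemma clifford 1} to trade $\hat{c}(\dvol)$ for $\star$ on both sides and then verify that the two degree-dependent signs differ by $-1$ -- but it differs in how the key $\star$/$\omega^*$ relation is obtained. The paper quotes the identity $\omega^*\lrcorner\alpha = (-1)^k\star\omega\wedge\star\alpha$ from Clausen--Tang--Tseng and then has to do a parity case analysis with $\star\star=(-1)^k$, whereas you derive the equivalent relation $\star(\omega^*\lrcorner\alpha)=\omega\wedge\star\alpha$ directly from the definition of $\omega^*\lrcorner$ as the $L^2$-adjoint, using $\langle\mu,\nu\rangle=\int_M\mu\wedge\star\nu$ and the fact that $\omega$ has even degree. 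This makes the argument self-contained (no external formula, no case split on the parity of $k$), at the cost of one extra standard ingredient you should state explicitly: concluding $\omega\wedge\star\alpha=\star(\omega^*\lrcorner\alpha)$ from the equality of the integrals for all $\beta\in\Omega^{k-2}(M)$ uses the nondegeneracy of the pairing $(\beta,\gamma)\mapsto\int_M\beta\wedge\gamma$, e.g.\ by testing against $\beta=\star\gamma$ with $\gamma=\omega\wedge\star\alpha-\star(\omega^*\lrcorner\alpha)$, which yields $\pm\|\gamma\|_{L^2}^2=0$. Your sign computation $(k-2)(k-1)/2-k(k+1)/2=1-2k$ (odd) is correct, and the low-degree cases $k<2$ cause no trouble since both sides vanish there.
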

\begin{proof}
    Using $\omega^*\lrcorner\alpha = (-1)^k\star\omega\star\alpha$ (See \cite[Section 2.1]{tangtsengclausensymplecticwitten}), we find
    \begin{align*}
        \hat{c}(\dvol)(\omega^*\lrcorner\alpha)
        =\ & \hat{c}(\dvol)\left((-1)^k\star\omega\star\alpha\right)\\
        =\ & (-1)^{\frac{(k-2+1)(k-2)}{2}}\star\left((-1)^k\star\omega\star\alpha\right) \ \text{(by Lemma \ref{lemma clifford 1})}\\
        =\ & (-1)^{\frac{(k-2+1)(k-2)}{2}}\star\left((-1)^k\star\left(\omega\wedge (-1)^{\frac{k(k+1)}{2}}\hat{c}(\dvol)\alpha\right)\right)\\
        =\ & (-1)^{k^2+1}\star\star(\omega\wedge(\hat{c}(\dvol)\alpha)).
    \end{align*}
    When $k$ is odd, $\omega\wedge\hat{c}(\dvol)\alpha$ is an odd-degree form, making $\star\star = -1$ and then $(-1)^{k^2+1}\star\star = -1$. 
    Similarly, when $k$ is even, we have $\star\star = 1$ and then $(-1)^{k^2+1}\star\star = -1$. 
    Thus, we obtain 
    $\hat{c}(\dvol)(\omega^*\lrcorner\alpha) = -\omega\wedge(\hat{c}(\dvol)\alpha)$. 
\end{proof}

We denote $\bigoplus_{k = 0}^{2n}\Omega^{2k}(M)$ (resp. $\bigoplus_{k = 1}^{2n}\Omega^{2k-1}(M)$) by $\Omega^\even(M)$ (resp. $\Omega^\odd(M)$). Using Lemma \ref{lemma clifford 1} and Lemma \ref{lemma clifford 2}, we obtain a skew-adjoint operator as follows. 
\begin{proposition}\label{proposition skew-adjoint signature operator}
    The operator 
    \begin{align}\label{skew-adj operator first}
    \begin{bmatrix}
        0 & 1\\
        1 & 0
    \end{bmatrix}\begin{bmatrix}
        \hat{c}(\dvol) & \\
        & \hat{c}(\dvol)
    \end{bmatrix}\begin{bmatrix}
        d+d^* & \omega\\
        \omega^* & -d-d^*
    \end{bmatrix}
    \end{align}
    on $\Omega^{\even}(M)\oplus\Omega^\odd(M)$ is skew-adjoint. 
\end{proposition}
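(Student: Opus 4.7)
The plan is to write the operator in $(\ref{skew-adj operator first})$ as a product $TA$, where
$$T \;:=\; \begin{bmatrix} 0 & 1\\ 1 & 0 \end{bmatrix}\begin{bmatrix} \hat{c}(\dvol) & \\ & \hat{c}(\dvol) \end{bmatrix} \;=\; \begin{bmatrix} 0 & \hat{c}(\dvol)\\ \hat{c}(\dvol) & 0 \end{bmatrix},\qquad A \;:=\; \partial_C+\partial_C^* \;=\; \begin{bmatrix} d+d^* & \omega\\ \omega^* & -d-d^* \end{bmatrix},$$
and then to identify its formal adjoint. Each $\hat{c}(e_i)$ is self-adjoint, and in dimension $4n$ the reordering $\hat{c}(e_{4n})\cdots\hat{c}(e_1)=\hat{c}(\dvol)$ carries the sign $(-1)^{4n(4n-1)/2}=+1$, so $T$ is self-adjoint. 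The operator $A$ is self-adjoint by construction. Therefore $(TA)^{*}=A^{*}T^{*}=AT$, and the skew-adjointness of $(\ref{skew-adj operator first})$ is equivalent to the anticommutation $TA+AT=0$.

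Expanding $TA+AT$ block by block and collecting the coefficients of each entry separately reduces the claim to three identities on $\Omega^\bullet(M)$:
\begin{align*}
(\mathrm{a})\ & \hat{c}(\dvol)(d+d^{*}) \;=\; (d+d^{*})\hat{c}(\dvol),\\
(\mathrm{b})\ & \hat{c}(\dvol)\omega^{*}+\omega\,\hat{c}(\dvol) \;=\; 0,\\
(\mathrm{c})\ & \hat{c}(\dvol)\omega+\omega^{*}\hat{c}(\dvol) \;=\; 0.
\end{align*}
Identity (b) is exactly Lemma \ref{lemma clifford 2}. For (c), the same parity count that gave $T^{*}=T$ also yields $\hat{c}(\dvol)^{2}=I$ (via Lemma \ref{lemma clifford 1}); multiplying (b) on the left and on the right by $\hat{c}(\dvol)$ then produces $\omega^{*}=-\hat{c}(\dvol)\omega\,\hat{c}(\dvol)$, which rearranges to (c).

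Identity (a) is the commutation of the signature involution with the de Rham Dirac operator in dimension $4n$. I would prove it by writing $d+d^{*}=\sum_{i}c(e_{i})\nabla_{e_{i}}$ in a local oriented orthonormal frame and invoking the standard Clifford relation $c(e_{i})\hat{c}(e_{j})+\hat{c}(e_{j})c(e_{i})=0$ for all $i,j$; since $\hat{c}(\dvol)$ is a product of $4n$ such factors and $4n$ is even, $c(e_{i})$ commutes with $\hat{c}(\dvol)$, and (a) follows from the parallelism of $\hat{c}(\dvol)$. Alternatively, one may verify (a) directly from Lemma \ref{lemma clifford 1} combined with $d^{*}=-\star d\star$ on $\Omega^{k}(M)$ in dimension $4n$, at the cost of a short parity comparison of $(k+1)(k+2)/2$ with $k(k+1)/2+(k+1)$.

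The $4n$-dimensional hypothesis enters only through the sign $(-1)^{4n(4n-1)/2}=+1$, which simultaneously makes $\hat{c}(\dvol)$ self-adjoint, involutive, and central with respect to each $c(e_{i})$. Once (a)--(c) are in hand, the block expansion of $TA+AT$ vanishes entry by entry, completing the proof. I do not foresee a real obstacle beyond this parity bookkeeping, which is essentially the content of Lemmas \ref{lemma clifford 1} and \ref{lemma clifford 2}.
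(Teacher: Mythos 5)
Your proposal is correct and follows essentially the same route as the paper: both reduce skew-adjointness to the self-adjointness of $\hat{c}(\dvol)$, the commutation $(d+d^*)\hat{c}(\dvol)=\hat{c}(\dvol)(d+d^*)$, and the identities $\hat{c}(\dvol)\omega^*=-\omega\,\hat{c}(\dvol)$ and $\omega^*\hat{c}(\dvol)=-\hat{c}(\dvol)\omega$ of Lemma \ref{lemma clifford 2} (your conjugation by $\hat{c}(\dvol)$ using $\hat{c}(\dvol)^2=1$ just makes explicit a step the paper leaves implicit). The only cosmetic differences are that you phrase the conclusion as the anticommutation $TA+AT=0$ of two self-adjoint factors, and that your primary proof of the commutation (a) uses $d+d^*=\sum_i c(e_i)\nabla_{e_i}$ with the $c$--$\hat{c}$ anticommutation relations, whereas the paper performs the Hodge-star parity computation you list as the alternative.
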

\begin{proof}
    Using Lemma \ref{lemma clifford 1} and \cite[Definition 6.1(2)]{warner2013foundations}, for all $\alpha\in\Omega^k(M)$, 
    \begin{align*}
         \hat{c}(\dvol)(d+d^*)\alpha =\ & (-1)^{\frac{(k+1)(k+2)}{2}}\star d\alpha + (-1)^{\frac{(k-1)k}{2}}\star (-1) \star d\star\alpha\\
        =\ & (-1)^{\frac{(k+1)(k+2)}{2}}\star d\alpha \\
        & + (-1)^{\frac{(k-1)k}{2}+1}(-1)^{(4n-k+1)(4n-4n+k-1)}d\star\alpha\\
        =\ & (-1)^{\frac{(k+1)(k+2)}{2}}\star d\alpha + (-1)^{\frac{k(k+1)}{2}}d\star\alpha. 
    \end{align*}
    Similarly, 
    \begin{align*}
         (d+d^*)\hat{c}(\dvol)\alpha 
        =\ & d (-1)^{\frac{k(k+1)}{2}}\star\alpha + d^* (-1)^{\frac{k(k+1)}{2}}\star\alpha \\
        =\ & (-1)^{\frac{k(k+1)}{2}}d\star\alpha + (-1)\star d\star (-1)^{\frac{k(k+1)}{2}}\star\alpha \\
        =\ & (-1)^{\frac{k(k+1)}{2}}d\star\alpha + (-1)^{\frac{(k+1)(k+2)}{2}}\star d\alpha. 
    \end{align*}
    Thus, we have $(d+d^*)\hat{c}(\dvol) = \hat{c}(\dvol)(d+d^*)$. 
    
    Now, 
   since $\hat{c}(\dvol)^* = \hat{c}(e_{4n})\cdots\hat{c}(e_1) = \hat{c}(e_1)\cdots\hat{c}(e_{4n}) = \hat{c}(\dvol)$, we find 
    \begin{align*}
        &\left(\begin{bmatrix}
            0 & 1\\
            1 & 0
        \end{bmatrix}\begin{bmatrix}
            \hat{c}(\dvol) & \\
            & \hat{c}(\dvol)
        \end{bmatrix}\begin{bmatrix}
            d+d^* & \omega \\
            \omega^* & -d-d^*
        \end{bmatrix}\right)^*\\
        =\ & \left(\begin{bmatrix}
            \hat{c}(\dvol)\omega^* & -\hat{c}(\dvol)(d+d^*)\\
            \hat{c}(\dvol)(d+d^*) & \hat{c}(\dvol)\omega
        \end{bmatrix}\right)^*\\
        =\ &\begin{bmatrix}
            \omega\hat{c}(\dvol) & (d+d^*)\hat{c}(\dvol)\\
            -(d+d^*)\hat{c}(\dvol) & \omega^*\hat{c}(\dvol)
        \end{bmatrix}\\
        =\ & -\begin{bmatrix}
            \hat{c}(\dvol)\omega^* & -\hat{c}(\dvol)(d+d^*)\\
            \hat{c}(\dvol)(d+d^*) & \hat{c}(\dvol)\omega
        \end{bmatrix} \text{\ (by Lemma \ref{lemma clifford 2})}\\
        =\ & -\begin{bmatrix}
            0 & 1\\
            1 & 0
        \end{bmatrix}\begin{bmatrix}
            \hat{c}(\dvol) & \\
            & \hat{c}(\dvol)
        \end{bmatrix}\begin{bmatrix}
            d+d^* & \omega \\
            \omega^* & -d-d^*
        \end{bmatrix}.
    \end{align*}
    Thus, the operator (\ref{skew-adj operator first}) is skew-adjoint. 
\end{proof}

We recall the definition of the Atiyah-Singer mod 2 index. In \cite[Theorem A]{atiyahsingerskewadjoint}, the mod 2 index was defined for real Fredholm skew-adjoint operators. However, by functional calculus \cite[Definition 1.13]{higson2000analytic}, we state the version \cite[(7.5)]{wittendeformationweipingzhang} for real elliptic skew-adjoint operators: 
\begin{definition}\label{atiyah-singer mod 2 index def}\normalfont
    Given a real elliptic skew-adjoint operator $D$, its Atiyah-Singer mod 2 index is the $\mathbb{Z}_2$-valued number 
    $$\dim\ker D\mod 2$$
    and is denoted by $\ind_2 D$. 
\end{definition}

According to the definition of $\kappa(M,\omega)$ and the identification between kernels and cohomology groups, we have: 
\begin{corollary}\label{corollary signature = semi char}
    The Atiyah-Singer mod 2 index of 
    \begin{align}\label{very first skew-adjoint op}
        \begin{bmatrix}
        0 & 1\\
        1 & 0
    \end{bmatrix}\begin{bmatrix}
        \hat{c}(\dvol) & \\
        & \hat{c}(\dvol)
    \end{bmatrix}\begin{bmatrix}
        d+d^* & \omega\\
        \omega^* & -d-d^*
    \end{bmatrix}
    \end{align}
    on $\Omega^{\even}(M)\oplus\Omega^\odd(M)$
    is equal to $\kappa(M,\omega)$. 
\end{corollary}
\begin{proof}
    This is verified by Definition \ref{definition of symp semi char}, Proposition \ref{proposition hodge theorem}, and Proposition \ref{proposition skew-adjoint signature operator}. With $\begin{bmatrix}
        0 & 1\\
        1 & 0
    \end{bmatrix}$, the operator (\ref{very first skew-adjoint op}) preserves the parity of the grading, mapping $\Omega^{\even}(M)\oplus\Omega^\odd(M)$ (resp. $\Omega^{\odd}(M)\oplus\Omega^\even(M)$) into $\Omega^{\even}(M)\oplus\Omega^\odd(M)$ (resp. $\Omega^{\odd}(M)\oplus\Omega^\even(M)$). When we restrict (\ref{very first skew-adjoint op}) to $\Omega^{\even}(M)\oplus\Omega^\odd(M)$, its kernel counts half of the $b^\omega_k$\hspace{+0.5mm}'s. 
\end{proof}

Similar to the Fredholm index \cite[Theorem 3.11]{conway2007course}, as stated in \cite[Proposition 5.1]{atiyahsingerskewadjoint} and \cite[Section 2]{atiyah_singer_1971index_V}, the mod 2 index of a real skew-adjoint elliptic operator on a compact manifold is homotopy invariant: 
\begin{proposition}\label{continuous family}
   Given a real skew-adjoint elliptic operator $D$ on $M$, the $\ind_2 D$ is invariant under the continuous deformation of $D$.   
\end{proposition}
\begin{remark}
\normalfont
     The invariance of $\ind_2$ in \cite[Proposition 5.1]{atiyahsingerskewadjoint} is for bounded real skew-adjoint Fredholm operators. It passes to the elliptic operators on $M$ in this way \cite[Section 4]{atiyah2013vector}: For $D$ in Proposition \ref{continuous family}, $1+(-D^2)$ is self-adjoint and positive. We have the compact operator
     \begin{align*}
         \left(1+(-D^2)\right)^{-1/2}
     \end{align*}
     defined by functional calculus \cite[Definition 1.13]{higson2000analytic}. Then,  
     \begin{align*}
         D\circ\left(1+(-D^2)\right)^{-1/2}
     \end{align*}
     is a bounded real skew-adjoint Fredholm operator. 
\end{remark} 

The next proposition gives us the skew-adjoint operator similar to \cite[(1.1)]{zhangcountingmod2indexkervairesemi}. 
\begin{proposition}
    The Atiyah-Singer mod 2 index of the skew-adjoint operator 
    \begin{align}\label{before witten def}
        \begin{bmatrix}
        \dfrac{1}{2}(\omega^*-\omega) & -d-d^*\\
        d+d^* & \dfrac{1}{2}(\omega-\omega^*)
    \end{bmatrix}
    \end{align}
    on $\Omega^\even(M)\oplus\Omega^\odd(M)$ is equal to $\kappa(M,\omega)$. 
\end{proposition}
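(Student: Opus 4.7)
The plan is to invoke the homotopy invariance of the mod-$2$ index (Proposition \ref{continuous family}) after a preliminary unitary conjugation of the operator from Corollary \ref{corollary signature = semi char}. Write $D$ for that operator and $\widetilde D$ for the operator displayed in (\ref{before witten def}). Skew-adjointness of $\widetilde D$ is immediate from $(\omega^*-\omega)^* = -(\omega^*-\omega)$ and the fact that its off-diagonal blocks are $\pm(d+d^*)$; ellipticity follows because its principal symbol is that of $\pm(d+d^*)$, which is invertible off the zero section.

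A naive linear interpolation from $D$ to $\widetilde D$ fails: the off-diagonal coefficient would move from $\hat{c}(\dvol)$ to the identity through $(1-s)\hat{c}(\dvol)+s$, which is singular at $s=1/2$ on the $(-1)$-eigenspace of $\hat{c}(\dvol)$, destroying ellipticity. To sidestep this I would first conjugate $D$ by the unitary $U = \mathrm{diag}(I,\hat{c}(\dvol))$ on $\Omega^\even(M)\oplus\Omega^\odd(M)$; here $\hat{c}(\dvol)$ is self-adjoint as recorded in the proof of Proposition \ref{proposition skew-adjoint signature operator}, and $\hat{c}(\dvol)^2 = I$ in dimension $4n$ is a short Clifford computation consistent with Lemma \ref{lemma clifford 1} and the identity $\star\star = (-1)^{k}$ on $\Omega^k(M)$. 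Using Lemma \ref{lemma clifford 2}, its consequence $\hat{c}(\dvol)\omega = -\omega^*\hat{c}(\dvol)$ obtained by multiplying both sides by $\hat{c}(\dvol)$, and the commutation $(d+d^*)\hat{c}(\dvol) = \hat{c}(\dvol)(d+d^*)$ established in the proof of Proposition \ref{proposition skew-adjoint signature operator}, a direct matrix computation gives
\[
UDU^{-1} = \begin{bmatrix} -\omega\hat{c}(\dvol) & -(d+d^*) \\ d+d^* & \omega\hat{c}(\dvol) \end{bmatrix},
\]
whose off-diagonal blocks already match those of $\widetilde D$. Unitarity of $U$ gives $\ker(UDU^{-1}) = U\ker D$, hence $\ind_2(UDU^{-1}) = \ind_2 D = k(M,\omega)$.

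Finally I would interpolate linearly only on the diagonal entries,
\[
E_s = \begin{bmatrix} -(1-s)\omega\hat{c}(\dvol) + \tfrac{s}{2}(\omega^*-\omega) & -(d+d^*) \\ d+d^* & (1-s)\omega\hat{c}(\dvol) + \tfrac{s}{2}(\omega-\omega^*) \end{bmatrix}, \qquad s\in[0,1],
\]
so that $E_0 = UDU^{-1}$ and $E_1 = \widetilde D$. Both $\omega\hat{c}(\dvol)$ and $\frac{1}{2}(\omega-\omega^*)$ are skew-adjoint---the former since $(\omega\hat{c}(\dvol))^* = \hat{c}(\dvol)\omega^* = -\omega\hat{c}(\dvol)$ by Lemma \ref{lemma clifford 2}, the latter since $(\omega^*)^* = \omega$---so each $E_s$ is skew-adjoint, and its principal symbol is $s$-independent, coming entirely from the unchanged $\pm(d+d^*)$ blocks, so $E_s$ stays elliptic throughout. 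Proposition \ref{continuous family} then yields $\ind_2 E_0 = \ind_2 E_1$, i.e., $\ind_2 \widetilde D = k(M,\omega)$. The substantive choice in the argument is the conjugating unitary $U$, which trades the obstruction to a direct linear homotopy for a purely zeroth-order deformation that trivially preserves the elliptic symbol.
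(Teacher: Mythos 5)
Your proof is correct, and it draws on the same ingredients as the paper---Corollary \ref{corollary signature = semi char}, Lemma \ref{lemma clifford 2}, the commutation $(d+d^*)\hat{c}(\dvol)=\hat{c}(\dvol)(d+d^*)$, and the homotopy invariance of Proposition \ref{continuous family}---but arranges them differently. The paper writes the operator $D$ of Corollary \ref{corollary signature = semi char} as $\mathrm{diag}(\hat{c}(\dvol),\hat{c}(\dvol))$ times the operator (\ref{before witten def}) plus the zeroth-order skew-adjoint term $\tfrac12\,\mathrm{diag}\bigl(\hat{c}(\dvol)(\omega^*+\omega),\hat{c}(\dvol)(\omega+\omega^*)\bigr)$, homotopes that term away, and then removes the left factor $\mathrm{diag}(\hat{c}(\dvol),\hat{c}(\dvol))$ by noting that left multiplication by an invertible operator does not change the kernel. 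You instead conjugate first by the real orthogonal $U=\mathrm{diag}(1,\hat{c}(\dvol))$---your computation of $UDU^{-1}$ is right, using $\hat{c}(\dvol)^2=1$, $\hat{c}(\dvol)\omega^*=-\omega\hat{c}(\dvol)$, and the commutation with $d+d^*$---and then run a purely zeroth-order homotopy on the diagonal, from $-\omega\hat{c}(\dvol)$ and $\omega\hat{c}(\dvol)$ to $\tfrac12(\omega^*-\omega)$ and $\tfrac12(\omega-\omega^*)$. What your arrangement buys is that every intermediate operator is manifestly skew-adjoint (unitary conjugation preserves skew-adjointness, and the deformation only alters skew-adjoint zeroth-order diagonal blocks with the elliptic symbol untouched), and you never need the separate observation about stripping a left $\hat{c}(\dvol)$ factor; what the paper's arrangement buys is that after its single homotopy the target operator appears directly, with no further deformation. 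Your preliminary remark that the naive straight-line homotopy from $D$ to (\ref{before witten def}) loses ellipticity midway is also correct and is exactly the obstruction both arguments are built to avoid.
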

\begin{proof}
By Lemma \ref{lemma clifford 2}, the operator
$$\dfrac{1}{2}\begin{bmatrix}
        \hat{c}(\dvol)(\omega^*+\omega) & \\
        & \hat{c}(\dvol)(\omega+\omega^*)
    \end{bmatrix}$$
    is skew-adjoint on 
 $\Omega^\even(M)\oplus\Omega^\odd(M)$. Then, by Corollary \ref{corollary signature = semi char}, we find 
   \begin{align*}
    & \kappa(M,\omega)\\
       =\ &\ind_2\begin{bmatrix}
        0 & 1\\
        1 & 0
    \end{bmatrix}\begin{bmatrix}
        \hat{c}(\dvol) & \\
        & \hat{c}(\dvol)
    \end{bmatrix}\begin{bmatrix}
        d+d^* & \omega\\
        \omega^* & -d-d^*
    \end{bmatrix} \\
    =\ & \ind_2\begin{bmatrix}
        \hat{c}(\dvol)\omega^* & -\hat{c}(\dvol)(d+d^*)\\
        \hat{c}(\dvol)(d+d^*) & \hat{c}(\dvol)\omega
    \end{bmatrix} \\
    =\ & \ind_2 \left(\hspace{-0.5mm}\begin{bmatrix}
        \hat{c}(\dvol)\dfrac{1}{2}(\omega^*-\omega) & -\hat{c}(\dvol)(d+d^*)\\
        \hat{c}(\dvol)(d+d^*) & \hat{c}(\dvol)\dfrac{1}{2}(\omega-\omega^*)
    \end{bmatrix} + \dfrac{1}{2}\begin{bmatrix}
        \hat{c}(\dvol)(\omega^*+\omega) & \\
        & \hat{c}(\dvol)(\omega+\omega^*)
    \end{bmatrix}\hspace{-0.5mm}\right)  \\
    =\ & \ind_2\begin{bmatrix}
        \hat{c}(\dvol)\dfrac{1}{2}(\omega^*-\omega) & -\hat{c}(\dvol)(d+d^*)\\
        \hat{c}(\dvol)(d+d^*) & \hat{c}(\dvol)\dfrac{1}{2}(\omega-\omega^*)
    \end{bmatrix} \\
    =\ & \ind_2\begin{bmatrix}
        \dfrac{1}{2}(\omega^*-\omega) & -d-d^*\\
        d+d^* & \dfrac{1}{2}(\omega-\omega^*)
    \end{bmatrix}. 
    \end{align*}
    The second-to-last equality is justified by Proposition \ref{continuous family}. 
\end{proof}

\section{Symplectic Witten deformation}\label{Symplectic Witten deformation section}
In this section, we study the symplectic Witten deformation of the skew-adjoint operator (\ref{before witten def}) on $C^\even(M,\omega) \coloneqq \Omega^{\even}(M)\oplus\Omega^{\odd}(M)$. 

We let $V$ be a nondegenerate smooth vector field on $M$. This means around each zero point $p$ of $V$, given any small local chart with coordinates $x_1,y_1,\cdots,x_{2n},y_{2n}$ satisfying $x_1(p) = \cdots = y_{2n}(p) = 0,$ there is an $\mathbb{R}^{4n}$-valued smooth function $B$ on the chart with order $$O(x_1^2+y_1^2+\cdots+x_{2n}^2+y_{2n}^2)$$ and a matrix $A\in GL_{4n}(\mathbb{R})$ such that 
\begin{align}\label{matrix A}
    V(x_1,y_1,\cdots,x_{2n},y_{2n}) = \begin{bmatrix}
\partial_{x_1},\partial_{y_1},\cdots,\partial_{x_{2n}},\partial_{y_{2n}}
\end{bmatrix}\left(A\begin{bmatrix}
    x_1\\
    y_1\\
    \vdots\\
    x_{2n}\\
    y_{2n}
\end{bmatrix}+B\right).
\end{align}
 Here, $\partial_{x_i}$ and $\partial_{y_i}$ are the local coordinate vector fields. For convenience, we let $${\mathbf{x}} = \begin{bmatrix}
    x_1\\
    y_1\\
    \vdots\\
    x_{2n}\\
    y_{2n}
\end{bmatrix},\ {\mathbf{x}}^{\mathrm{t}} = [x_1,y_1,\cdots,x_{2n},y_{2n}], $$ and $|{\mathbf{x}}| = \left(x_1^2+y_1^2+\cdots+x_{2n}^2+y_{2n}^2\right)^{1/2}$. In particular, for any column $\mathbf{v} = \begin{bmatrix}
        \varphi_1,\cdots,\varphi_{4n}
    \end{bmatrix}^\mathrm{t}$, we let $|\mathbf{v}| = \sqrt{\varphi_1^2+\cdots+\varphi_{4n}^2}.$ 
\begin{lemma}\label{replacing vector field lemma}
    There is a smooth vector field $X$ on $M$ such that the zero set of $X$ is the same as the zero set of $V$, and 
    $$X = \begin{bmatrix}
\partial_{x_1},\partial_{y_1},\cdots,\partial_{x_{2n}},\partial_{y_{2n}}
\end{bmatrix}A{\mathbf{x}}$$
    near each zero point $p$.  
\end{lemma}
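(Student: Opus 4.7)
The plan is to modify $V$ only on small neighborhoods of its zeros, replacing it there by its linearization while preserving the zero locus. Because $V$ is nondegenerate, each zero is isolated, so by compactness of $M$ there are only finitely many zeros $p_1,\ldots,p_N$. Around each $p_i$ I fix a coordinate chart $(U_i,\mathbf{x})$ in which (\ref{matrix A}) holds with matrix $A_i\in GL_{2n}(\mathbb{R})$ and remainder $B_i=O(|\mathbf{x}|^2)$. I then shrink the $U_i$ so that they are pairwise disjoint, $V$ has no other zeros on $U_i$, and
$$
|A_i\mathbf{x}|\ \geq\ 2\,|B_i(\mathbf{x})|\qquad\text{on }U_i.
$$
Such a shrinking is available because $A_i$ is invertible, giving $|A_i\mathbf{x}|\geq c_i|\mathbf{x}|$ for some $c_i>0$, while $|B_i(\mathbf{x})|=O(|\mathbf{x}|^2)$.

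Next, I choose smaller open neighborhoods $U_i'\Subset U_i$ and smooth cutoffs $\rho_i\in C^\infty(M)$ with $\supp \rho_i\subset U_i$, $\rho_i\equiv 1$ on $U_i'$, and $0\leq\rho_i\leq 1$. Writing $X_i=[\partial_{x_1},\ldots,\partial_{y_{2n}}]A_i\mathbf{x}$ for the local linear model on $U_i$, I define
$$
X\ \coloneqq\ V+\sum_{i=1}^N \rho_i\,(X_i-V),
$$
extending each summand by zero outside its $U_i$. Then $X$ is smooth on $M$, equals $V$ outside $\bigcup_i U_i$, and coincides with $X_i$ on $U_i'$, so the prescribed local normal form holds near each zero.

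It remains to check that $X$ has the same zero set as $V$. Outside $\bigcup_i U_i$ this is automatic. On $U_i$, the convex combination simplifies to
$$
X\ =\ (1-\rho_i)\,V+\rho_i X_i\ =\ A_i\mathbf{x}+(1-\rho_i)\,B_i(\mathbf{x}),
$$
and the size estimate gives $|X|\geq\tfrac{1}{2}|A_i\mathbf{x}|$, which vanishes only at $p_i$. The delicate step, and the main obstacle, is precisely this interpolation on the transition annulus $U_i\setminus U_i'$: the quadratic error $B_i$ must be dominated by the linear part $A_i\mathbf{x}$, which is why the preliminary shrinking of $U_i$ is essential. Once that is arranged, the rest of the argument is a routine bump function patching.
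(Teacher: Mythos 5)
Your proof is correct and follows essentially the same route as the paper: interpolate between $V$ and its linearization with a bump function on a chart small enough that the linear term $A\mathbf{x}$ dominates the quadratic remainder $B$, which keeps the zero set unchanged. The only difference is bookkeeping (you make the finiteness/disjointness of the zeros and the estimate $|A_i\mathbf{x}|\geqslant 2|B_i|$ explicit, where the paper works chartwise with the bound $|B|\leqslant C|\mathbf{x}|^2$ and the support condition on $\sigma$), so no further changes are needed.
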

\begin{proof}
We will use a cutoff function to modify the vector field $V$ near its zeros so that it agrees with a standard local model. We find a constant $C>0$ such that 
    \begin{align}\label{vector field replacing 1}
        |B|\leqslant C\left|\mathbf{x}\right|^2
    \end{align}
    on the local chart centered at $p$. Viewing $A$ as an operator on the linear space $\mathbb{R}^{4n}$, we let $\|A\|$ be its operator norm. Then, we choose a bump function $\sigma$ such that 
    \begin{align}\label{vector field replacing 2}
        \supp(\sigma)\subseteq\left\{(x_1,\cdots,y_{2n}): \left(x_1^2+\cdots+y_{2n}^2\right)^{1/2}<\|A^{-1}\|^{-1}\cdot C^{-1}\right\}
    \end{align}
    and $\sigma = 1$ near $p$. Now, we show that 
    \begin{align*}
        X =\ & \sigma V+(1-\sigma)\begin{bmatrix}
\partial_{x_1},\partial_{y_1},\cdots,\partial_{x_{2n}},\partial_{y_{2n}}
\end{bmatrix}A{\mathbf{x}}\\
=\ & \begin{bmatrix}
\partial_{x_1},\partial_{y_1},\cdots,\partial_{x_{2n}},\partial_{y_{2n}}
\end{bmatrix}\left(A{\mathbf{x}}+(1-\sigma)B\right)
    \end{align*} 
    is the vector field we need. 
    Actually, by (\ref{vector field replacing 1}) and (\ref{vector field replacing 2}), 
    \begin{align*}
       \left|{\mathbf{x}}+(1-\sigma)A^{-1}B\right|
        \geqslant |\mathbf{x}|-\|A^{-1}\|\cdot C\cdot |\mathbf{x}|^2
        \geqslant 0. 
    \end{align*}
    The last ``$\geqslant$'' becomes ``$=$'' if and only if $\mathbf{x}$ is zero. 
    Thus, $$A\mathbf{x}+(1-\sigma)B = A\cdot\left({\mathbf{x}}+(1-\sigma)A^{-1}B\right) = 0$$ if and only if at the zero point $p$ of $V$. Therefore, the zero set of $X$ coincides that of $V$. 
\end{proof}

Inspired by \cite[Section 2]{witten}, \cite[Chapters VIII-X]{bismutandlebeau}, \cite[Section 7.3]{wittendeformationweipingzhang}, and \cite[Section 2]{zhangcountingmod2indexkervairesemi}, for a parameter $T>0$, we use the vector field $X$ to set up the Witten deformation 
\begin{align}\label{the witten deformation of the skew adjoint operator constructed above and use later}
\mathbb{D}_{T}\coloneqq \begin{bmatrix}
    \dfrac{1}{2}(\omega^*-\omega) & -d-d^*-T\hat{c}(X)\\ 
    d+d^*+T\hat{c}(X) & \dfrac{1}{2}(\omega-\omega^*)
\end{bmatrix}
\end{align}
of the operator (\ref{before witten def}) on $\Omega^\even(M)\oplus\Omega^\odd(M)$. Let $\varepsilon>0$ be a sufficiently small number. Around each zero point $p$ of $X$, we choose the chart 
\begin{align}\label{chart 4 epsilon}
    U = \{(x_1,\cdots,y_{2n}): x_1^2+y_1^2+\cdots+x_{2n}^2+y_{2n}^2 < (4\varepsilon)^2\}
\end{align}
centered at $p$ such that the following three items hold true at the same time: 
\begin{enumerate}[label = (\arabic*)]
    \item $\omega\vert_U = dx_1\wedge dy_1+\cdots+dx_{2n}\wedge dy_{2n}$;
    \item The metric $g(\cdot,\cdot)\vert_U = dx_1^2+dy_1^2+\cdots+dx_{2n}^2+dy_{2n}^2$;
    \item $X\vert_U = \begin{bmatrix}
\partial_{x_1},\partial_{y_1},\cdots,\partial_{x_{2n}},\partial_{y_{2n}}
\end{bmatrix}A{\mathbf{x}}$.
\end{enumerate}
Actually, we can obtain the above (1)-(3) in this way: By \cite[Theorem 8.1]{daSilva2008}, we first choose a Darboux chart $U$ centered at the zero point $p$ with coordinates $x_1,y_1,\cdots,x_{2n},y_{2n}$ such that 
$$\omega\vert_U = dx_1\wedge dy_1+\cdots+dx_{2n}\wedge dy_{2n}.$$ Second, we construct a metric $g'$ on $M$ such that $$g'(\cdot,\cdot)\vert_U = dx_1^2+dy_1^2+\cdots+dx_{2n}^2+dy_{2n}^2.$$ Third, 
according to the proof of \cite[Proposition 12.3]{daSilva2008}, we use the polar decomposition together with $g'$ to construct the almost complex structure $J$. Then, we let $g(\cdot,\cdot) = \omega(\cdot,J\cdot)$. The two metrics $g(\cdot,\cdot)$ and $g'(\cdot,\cdot)$ are different, but checking the polar decomposition, we have $$g(\cdot,\cdot)\vert_U = g'(\cdot,\cdot)\vert_U = dx_1^2+dy_1^2+\cdots+dx_{2n}^2+dy_{2n}^2.$$
Finally, the vector field $X$ is guaranteed by Lemma \ref{replacing vector field lemma}. 

Let ${\mathbf{e}_i} = \begin{bmatrix}
    0, 
    \cdots, 
    1, 
    0, 
    \cdots, 
    0
\end{bmatrix}^\mathrm{t}$ (the $(2i-1)$-th entry is $1$) and ${\mathbf{f}_i} = \begin{bmatrix}
    0, 
    \cdots, 
    0, 
    1, 
    \cdots, 
    0
\end{bmatrix}^\mathrm{t}$ (the $2i$-th entry is $1$). Then, inside $U$, we find that  
\begin{align*}
     & (d+d^*+T\hat{c}(X))^2 \\
    =\  & -\sum_{i = 1}^{2n}\partial_{x_i}^2-\sum_{i = 1}^{2n}\partial_{y_i}^2 + T\sum_{i = 1}^{2n}c(\partial_{x_i})\hat{c}\left(\begin{bmatrix}
        \partial_{x_1},\partial_{y_1},\cdots,\partial_{x_{2n}},\partial_{y_{2n}}
    \end{bmatrix}A{\mathbf{e}_i}\right)\\
    & + T\sum_{i = 1}^{2n}c(\partial_{y_i})\hat{c}\left(\begin{bmatrix}
        \partial_{x_1},\partial_{y_1},\cdots,\partial_{x_{2n}},\partial_{y_{2n}}
    \end{bmatrix}A{\mathbf{f}_i}\right) + T^2\mathbf{x}^\mathrm{t}
    A^*A{\mathbf{x}}.
\end{align*}

Now, on $\mathbb{R}^{4n}$ (coordinates denoted by $x_1,y_1,\cdots,x_{2n},y_{2n}$), we let $$X_0 = \begin{bmatrix}
        \partial_{x_1},\partial_{y_1},\cdots,\partial_{x_{2n}}, \partial_{y_{2n}}
    \end{bmatrix}A{\mathbf{x}}.$$ Meanwhile, using the standard Euclidean metric $$g_0 \coloneqq dx_1^2+dy_1^2+\cdots+dx_{2n}^2+dy_{2n}^2$$ on $\mathbb{R}^{4n}$, we have the $L^2$-norm (also, the inner product)
\begin{align}\label{norm on rn no pair}
    \left\|\alpha \right\| = \left(\int_{\mathbb{R}^{4n}}g_0(\alpha,\alpha)dx_1\wedge dy_1\wedge\cdots\wedge dx_{2n}\wedge dy_{2n}\right)^{1/2}
\end{align}
on the space $\Omega^k(\mathbb{R}^{4n})$ of smooth $k$-forms on $\mathbb{R}^{4n}$. Also, for the standard symplectic form 
$$\omega_0 = dx_1\wedge dy_1+\cdots+dx_{2n}\wedge dy_{2n}$$
on $\mathbb{R}^{4n}$, we let 
$$\omega_0^*\lrcorner = \partial_{y_1}\lrcorner\partial_{x_1}\lrcorner+\cdots+\partial_{y_{2n}}\lrcorner\partial_{x_{2n}}\lrcorner$$ be the adjoint of $\omega_0\wedge$\ . 

Let $L$ be the operator with the expression 
\begin{align*}
\begin{split}
    & -\sum_{i = 1}^{2n}\partial_{x_i}^2-\sum_{i = 1}^{2n}\partial_{y_i}^2 + T\sum_{i = 1}^{2n}c(\partial_{x_i})\hat{c}\left(\begin{bmatrix}
        \partial_{x_1},\partial_{y_1},\cdots,\partial_{x_{2n}},\partial_{y_{2n}}
    \end{bmatrix}A{\mathbf{e}_i}\right)\\
    & + T\sum_{i = 1}^{2n}c(\partial_{y_i})\hat{c}\left(\begin{bmatrix}
        \partial_{x_1},\partial_{y_1},\cdots,\partial_{x_{2n}},\partial_{y_{2n}}
    \end{bmatrix}A{\mathbf{f}_i}\right) + T^2\mathbf{x}^\mathrm{t}A^*A{\mathbf{x}}
\end{split}
\end{align*}
but defined on the space $\bigoplus_{k = 0}^{4n}\Omega^k(\mathbb{R}^{4n})$ of smooth forms on $\mathbb{R}^{4n}$. Like in \cite[(4.23)]{wittendeformationweipingzhang}, we let 
\begin{align*}
    L' =\  & -\sum_{i = 1}^{2n}\partial_{x_i}^2-\sum_{i = 1}^{2n}\partial_{y_i}^2 - T\cdot\text{trace}\left(\sqrt{A^*A}\right) + T^2\mathbf{x}^\mathrm{t}A^*A{\mathbf{x}}
\end{align*}
and 
\begin{align*}
    L'' =\ \text{trace}\left(\sqrt{A^*A}\right) & + \sum_{i = 1}^{2n}c(\partial_{x_i})\hat{c}\left(\begin{bmatrix}
        \partial_{x_1},\partial_{y_1},\cdots,\partial_{x_{2n}},\partial_{y_{2n}}
    \end{bmatrix}A{\mathbf{e}_i}\right) \\
    & + \sum_{i = 1}^{2n}c(\partial_{y_i})\hat{c}\left(\begin{bmatrix}
        \partial_{x_1},\partial_{y_1},\cdots,\partial_{x_{2n}},\partial_{y_{2n}}
    \end{bmatrix}A{\mathbf{f}_i}\right).
\end{align*}
Then, $L = L'+T\cdot L''$. Actually, $L'$ is the (rescaled) harmonic oscillator \cite[Chapter 8, Section 6]{taylor2010partialvol2} on the space of square-integrable functions on $\mathbb{R}^{4n}$, and $L''$ is a nonnegative operator on the space 
\begin{align*}
        &\text{span}_\mathbb{R}\{dx_{i_1}\wedge\cdots\wedge dx_{i_r}\wedge dy_{j_1}\wedge\cdots\wedge dy_{j_s}:\ \ \ \ \ \ \ \ \ \ \ \ \ \ \ \ \ \ \ \ \ \ \ \ \ \ \ \ \ \ \ \ \ \ \ \ \ \ \ \ \ \ \ \ \ \ \ \  \\ & \ \ \ \ \ \ \ \ \ \ \ 0\leqslant i_1<\cdots< i_r\leqslant 2n, 0\leqslant j_1<\cdots<j_s\leqslant 2n, 0\leqslant r,s\leqslant 2n\}.
    \end{align*}
Based on the properties \cite[Chapter 8, Section 6]{taylor2010partialvol2} of harmonic oscillators, we have the following proposition \cite[Proposition 4.9]{wittendeformationweipingzhang}. The proof is given in \cite[(4.23)-(4.25) and Lemma 4.8]{wittendeformationweipingzhang}.  
\begin{proposition}\label{proposition of harmonic oscillator with forms}
    For any $T>0$, the kernel of $L$ is $1$-dimensional and generated by
    \begin{align}\label{kernel element one generator}
        \rho = \exp\left(-\dfrac{T}{2}{\mathbf{x}}^\mathrm{t}\sqrt{A^*A}{\mathbf{x}}\right)\cdot\delta\ ,
    \end{align}
    where $\delta$ is a certain linear combination (with real coefficients independent of $T$) of 
    \begin{align*}
        \{dx_{i_1}\wedge\cdots\wedge dx_{i_r}\wedge dy_{j_1}\wedge\cdots\wedge dy_{j_s}: \ \ \ \ \ \ \ \ \ \ \ \ \ \ \ \ \ \ \ \ \ \ \ \ \ \ \ \ \ \ \ \ \ \ \ \ \ \ \ \ \ \ \  \\ 0\leqslant i_1<\cdots< i_r\leqslant 2n, 0\leqslant j_1<\cdots<j_s\leqslant 2n, 0\leqslant r,s\leqslant 2n\}.
    \end{align*}
    Also, the grading of $\delta$ is even (resp. odd) if $\det A > 0$ (resp. $\det A < 0$). Moreover, each nonzero eigenvalue of $L$ has the expression $\alpha\cdot T$ ($\alpha$ is a positive constant independent of $T$). 
\end{proposition}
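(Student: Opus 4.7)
The plan is to reduce $L$ to a decoupled form via a linear change of coordinates on $\mathbb{R}^{4n}$ and then analyze a scalar harmonic oscillator together with a finite-dimensional Clifford operator. First I would invoke the polar decomposition $A = U\sqrt{A^*A}$ with $U\in O(4n)$, and then diagonalize the positive-definite symmetric matrix $\sqrt{A^*A}$ by an orthogonal transformation $P$: setting $\mathbf{x} = P\mathbf{z}$ so that $P^{\mathrm{t}}\sqrt{A^*A}\,P = \mathrm{diag}(\mu_1,\dots,\mu_{4n})$ with $\mu_i>0$. This change of variables preserves the $L^2$ inner product on $\mathbb{R}^{4n}$ and transports the Clifford actions $c$ and $\hat{c}$ equivariantly (via the $\mathrm{Spin}(4n)$-lift of $P$), so after the change $L'$ decouples into a sum of one-dimensional rescaled harmonic oscillators while $L''$ becomes a constant-coefficient endomorphism of the exterior algebra $\Lambda^* \mathbb{R}^{4n}$.

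Next, for each $i$ the one-dimensional oscillator $-\partial_{z_i}^2 + T^2\mu_i^2 z_i^2 - T\mu_i$ is the standard rescaled harmonic oscillator from Chapter~8 of Taylor's text: its unique ground state is $\exp\!\bigl(-\tfrac{T\mu_i}{2}z_i^2\bigr)$ with eigenvalue $0$, and the other eigenvalues are $2T\mu_i k$ with $k\in\mathbb{Z}_{>0}$. Taking products over $i$, the kernel of $L'$ is one-dimensional with generator $\exp\!\bigl(-\tfrac{T}{2}\mathbf{z}^{\mathrm{t}}\mathrm{diag}(\mu_i)\mathbf{z}\bigr) = \exp\!\bigl(-\tfrac{T}{2}\mathbf{x}^{\mathrm{t}}\sqrt{A^*A}\,\mathbf{x}\bigr)$, and every other eigenvalue of $L'$ is of the form $\alpha T$ with $\alpha>0$ independent of $T$. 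For $L''$, I would use the Clifford identities $c(v)^2 = -|v|^2$, $\hat{c}(w)^2 = |w|^2$, and the mixed anticommutation rules to rewrite $L''$ as a sum of squares of Clifford elements, showing $L''\geq 0$ with a one-dimensional kernel. In the decoupled frame, each $2\times 2$ block contributes a one-dimensional kernel spanned by either $1$ or $dz_{2i-1}\wedge dz_{2i}$ according to the sign pattern of the corresponding block of $U$, and the parity of the resulting top form $\delta$ is the product of these block signs, which is exactly $\mathrm{sgn}(\det U) = \mathrm{sgn}(\det A)$.

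Finally, because $L'$ only involves derivatives and multiplication by functions on $\mathbb{R}^{4n}$ (acting as a scalar on each form-component in the standard frame) while $L''$ is a pointwise algebraic operator with constant coefficients on the exterior algebra, the operators $L'$ and $TL''$ commute. The spectral theorem then identifies the spectrum of $L = L' + TL''$ with the sumset of the individual spectra, so $\ker L = \ker L'\otimes \ker L''$ is one-dimensional with generator $\rho = \exp\!\bigl(-\tfrac{T}{2}\mathbf{x}^{\mathrm{t}}\sqrt{A^*A}\,\mathbf{x}\bigr)\cdot\delta$; any nonzero eigenvalue of $L$ is either a positive eigenvalue of $L'$, of the form $\alpha T$, or a strictly positive eigenvalue of $TL''$, again of the form $\alpha T$.

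The main obstacle I anticipate is tracking the Clifford part $L''$ under the orthogonal change of frame, since the representation on $\Lambda^* \mathbb{R}^{4n}$ does not commute with $O(4n)$ directly; one must lift to $\mathrm{Spin}(4n)$, or equivalently argue block by block in the normal form where $A$ factors through $2\times 2$ rotations and dilations. Getting the parity of $\delta$ correct, and hence its dependence on $\mathrm{sgn}(\det A)$, requires care with this lift, while the harmonic-oscillator part is a routine specialization of the analysis in Chapter~4 of the Zhang Witten-deformation monograph.
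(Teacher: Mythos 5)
Your overall skeleton (split $L=L'+T\,L''$, observe that the scalar harmonic-oscillator part and the constant-coefficient endomorphism of $\Lambda^*\mathbb{R}^{4n}$ commute, take the sumset of spectra, and read off the $\alpha\cdot T$ scaling) is the standard route and is consistent with the paper, which does not reprove this statement but simply cites \cite[(4.23), Lemma 4.8]{wittendeformationweipingzhang} for the $L''$-part and Taylor for the oscillator. The oscillator analysis and the commutation/tensor-product step in your proposal are fine. The gap is exactly at the point the paper delegates to Zhang's Lemma 4.8: your treatment of $L''$. Here $A$ is the linearization of a general nondegenerate vector field, so it is an arbitrary invertible matrix, in general neither symmetric nor normal. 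You only have one orthogonal conjugation at your disposal (the same $P$ must act on the coordinates and on the frame, since they live on the same $\mathbb{R}^{4n}$), and after choosing $P$ to diagonalize $\sqrt{A^*A}$ the conjugated matrix is $P^{\mathrm{t}}AP=(P^{\mathrm{t}}UP)(P^{\mathrm{t}}\sqrt{A^*A}P)$, whose orthogonal factor $P^{\mathrm{t}}UP$ is a completely general element of $O(4n)$. So $L''$ does \emph{not} decouple into $2\times 2$ blocks in the frame that decouples $L'$; a normal form of $A$ as an orthogonal direct sum of rotation--dilation blocks exists only when $A$ is normal ($AA^{\mathrm{t}}=A^{\mathrm{t}}A$), e.g.\ it fails already for a triangular $A$ with distinct singular values. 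Consequently the ``argue block by block'' step, on which both the nonnegativity of $L''$, the one-dimensionality of $\ker L''$, and the parity claim rest, is not available in general, and no explicit sum-of-squares expression for $L''$ is exhibited that would replace it.

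Two further inaccuracies in the same step: even when a rotation--dilation block structure does exist, the kernel vector of a block is generically a nontrivial linear combination of $1$ and $dz_{2i-1}\wedge dz_{2i}$, not one or the other (this is precisely why the proposition only asserts that $\delta$ is ``a certain linear combination''); and ``sum of squares'' by itself would only give $L''\geqslant 0$, not $\dim\ker L''=1$, which is the real content. The statement you need for general invertible $A$ --- that $\mathrm{trace}\bigl(\sqrt{A^*A}\bigr)+\sum_{j,k}A_{kj}\,c(e_j)\hat{c}(e_k)$ is nonnegative with exactly one-dimensional kernel whose parity is $\mathrm{sgn}(\det A)$ --- is Lemma 4.8 of Zhang's monograph, and your argument as written does not prove it; either cite it, as the paper does, or supply a proof that does not presuppose the nonexistent simultaneous block normal form. (A minor remark: no $\mathrm{Spin}$ lift is needed anywhere --- the exterior algebra is already an $O(4n)$-representation and pullback by an orthogonal map intertwines $c$ and $\hat{c}$ equivariantly.)
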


Notice that $\omega_0^*\lrcorner - \omega_0\wedge$ is skew-symmetric, we have: 
\begin{proposition}\label{functional calculus inverse}
    There exists a unique smooth form $\eta$ on $\mathbb{R}^{4n}$ such that 
    $$\dfrac{1}{2}(\omega_0^*-\omega_0)\rho = \left(d+d^*+T\hat{c}\left(X_0\right)\right)\eta$$
    and $\eta\perp\rho$. 
    Here, $d+d^*$ is defined on $\bigoplus_{k = 0}^{4n}\Omega^k(\mathbb{R}^{4n})$ according to the $L^2$-norm of forms.
\end{proposition}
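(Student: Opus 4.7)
The plan is to exploit the fact that $D := d + d^* + T\hat{c}(X_0)$ is formally self-adjoint with $D^2 = L$, so the spectral information from Proposition \ref{proposition of harmonic oscillator with forms} translates into invertibility of $D$ on the orthogonal complement of its kernel, and $\eta$ will be obtained by applying $D^{-1}$ to $\tfrac{1}{2}(\omega_0^* - \omega_0)\rho$.

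First I would verify that $\tfrac{1}{2}(\omega_0^* - \omega_0)\rho$ lies in $(\ker D)^\perp = \mathrm{span}(\rho)^\perp$. Because $\omega_0^*\lrcorner$ is by definition the adjoint of $\omega_0\wedge$, the real operator $\omega_0^* - \omega_0$ is formally skew-adjoint, and the paper notes this skew-symmetry just before the proposition. Consequently
\begin{equation*}
\langle (\omega_0^* - \omega_0)\rho, \rho\rangle = -\langle \rho, (\omega_0^* - \omega_0)\rho\rangle = -\langle (\omega_0^* - \omega_0)\rho, \rho\rangle,
\end{equation*}
so this inner product vanishes. Moreover $(\omega_0^* - \omega_0)\rho$ is a Gaussian times a polynomial wedge/contraction, hence square-integrable.

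Second I would record spectral properties of $D$. Proposition \ref{proposition of harmonic oscillator with forms} gives $L = D^2$ discrete spectrum, with one-dimensional kernel spanned by $\rho$ and all other eigenvalues equal to $\alpha T > 0$. Since $\mathbb{R}^{4n}$ is complete, the first-order elliptic Dirac-type operator $D$ is essentially self-adjoint on $C_c^\infty$; then $\ker D = \ker L = \mathrm{span}(\rho)$, and the eigenvalues of $D$ on $\mathrm{span}(\rho)^\perp$ are $\pm\sqrt{\alpha T}$, bounded away from zero. Hence $D$ restricts to a boundedly invertible self-adjoint operator on $\mathrm{span}(\rho)^\perp$, and I may define
\begin{equation*}
\eta := D^{-1}\!\left(\tfrac{1}{2}(\omega_0^* - \omega_0)\rho\right),
\end{equation*}
which by construction satisfies $D\eta = \tfrac{1}{2}(\omega_0^* - \omega_0)\rho$ and $\eta \perp \rho$. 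Smoothness of $\eta$ then follows from elliptic regularity applied to $D$, since $D\eta$ is smooth. For uniqueness, any other $\eta'$ satisfying the same equation with $\eta' \perp \rho$ must have $\eta - \eta' \in \ker D \cap \mathrm{span}(\rho)^\perp = \{0\}$.

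The step I expect to be most delicate is justifying the spectral calculus for $D$ on the noncompact space $\mathbb{R}^{4n}$: strictly speaking, one must upgrade the eigenform picture of Proposition \ref{proposition of harmonic oscillator with forms} for $L$ to a pure-point spectral decomposition for $D$, and identify $\ker D$ with $\ker L$. Fortunately both follow from essential self-adjointness of $D$ together with the identity $\|D\phi\|^2 = \langle L\phi,\phi\rangle$ on $C_c^\infty$, so this obstacle is more bookkeeping than genuine difficulty.
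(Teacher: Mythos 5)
Your proposal is correct and follows essentially the same route as the paper: you check that $\tfrac{1}{2}(\omega_0^*-\omega_0)\rho$ is orthogonal to $\ker L=\mathrm{span}(\rho)$ via skew-symmetry and then invert $d+d^*+T\hat{c}(X_0)$ on the orthogonal complement using the spectral gap from Proposition \ref{proposition of harmonic oscillator with forms}. The only cosmetic difference is that the paper writes the solution as $\eta = L^{-1}\circ\left(d+d^*+T\hat{c}(X_0)\right)\left(\tfrac{1}{2}(\omega_0^*-\omega_0)\rho\right)$, which agrees with your $D^{-1}$ applied to the same source on $\mathrm{span}(\rho)^\perp$, and your explicit remarks on uniqueness, elliptic regularity, and essential self-adjointness only make precise what the paper leaves implicit (citing Bismut--Lebeau for $L^{-1}$).
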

\begin{proof}
    Recall the definition (\ref{kernel element one generator}) of $\rho$. We notice that $$g_0\left((\omega_0^*-\omega_0)\rho,\rho\right) = 0.$$
    Therefore, $\dfrac{1}{2}(\omega_0^*-\omega_0)\rho$ is orthogonal to 
    the kernel of $L$ on $\bigoplus_{k = 0}^{4n}\Omega^k(\mathbb{R}^{4n})$. Then, since $d+d^*+T\hat{c}\left(X_0\right)$ preserves the eigenspaces of $L$, we find
    \begin{align}\label{applying the inverse}
        \eta = L^{-1}\circ\left(d+d^*+T\hat{c}\left(X_0\right)\right)\left(\dfrac{1}{2}(\omega_0^*-\omega_0)\rho\right).
    \end{align}
    Here, this $L^{-1}$ is the inverse of $L$ restricted to the orthogonal complement of $\ker(L)$. The kernel of $L$ is given by Proposition \ref{proposition of harmonic oscillator with forms}. See \cite[(10.17)]{bismutandlebeau} for more details about the inverse map $L^{-1}$. 
\end{proof}

The next proposition will be used in the estimates of the spectrum of $-\mathbb{D}_T^2$. 
\begin{proposition}
    There is a constant $C_1\geqslant 0$ independent of $T$ such that
    \begin{align}\label{key estimate rho eta}
       \|\eta\| = C_1 T^{-1/2}\cdot\|\rho\|.
    \end{align}
    Here, the $L^2$-norm is that on the space of forms on $\mathbb{R}^{4n}$. 
\end{proposition}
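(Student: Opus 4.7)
My plan is a homogeneity (scaling) argument: under the substitution $\mathbf{y}=\sqrt{T}\,\mathbf{x}$, every object in the construction of $\eta$ splits as a $T$-independent piece times an explicit power of $T$, and tracking these powers gives the identity. Introduce the unitary rescaling $U_T$ on $L^2(\Lambda^*T^*\mathbb{R}^{4n})$ defined on monomial forms by $(U_T(f\,dx_I))(\mathbf{y}) = T^{-n}f(\mathbf{y}/\sqrt{T})\,dy_I$; a change of variables (the Jacobian $d\mathbf{y}=T^{2n}d\mathbf{x}$ cancels the prefactor $T^{-2n}$ in the integrand) shows that $U_T$ is unitary. Direct computation then yields $U_T\partial_{x_i}U_T^{-1}=\sqrt{T}\,\partial_{y_i}$ and $U_T(x_i\cdot)U_T^{-1}=(y_i/\sqrt{T})\,\cdot$, while the algebraic operators $dx_i\wedge$ and $\partial_{x_i}\lrcorner$ conjugate to $dy_i\wedge$ and $\partial_{y_i}\lrcorner$ unchanged.

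Since $X_0=A\mathbf{x}$ is linear in $\mathbf{x}$, these combine to
$$
U_T\bigl(d+d^*+T\hat{c}(X_0)\bigr)U_T^{-1} = \sqrt{T}\,D_1, \qquad U_T L U_T^{-1} = TL_1,
$$
where $L_1=D_1^2$ is the analogous operator in $\mathbf{y}$-coordinates at $T=1$. Similarly $U_T(\omega_0^*-\omega_0)U_T^{-1}=\Omega_1$ is $T$-independent and algebraic, and $U_T\rho = T^{-n}\rho_1$ with $\rho_1 = \exp\bigl(-\tfrac{1}{2}\mathbf{y}^{\mathrm{t}}\sqrt{A^*A}\,\mathbf{y}\bigr)\delta$; in particular $\|\rho\|=T^{-n}\|\rho_1\|$. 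Applying Proposition \ref{proposition of harmonic oscillator with forms} at $T=1$, $\ker L_1=\mathbb{R}\rho_1$ with a positive spectral gap, so $L_1^{-1}$ is well-defined on $(\ker L_1)^\perp$; the pointwise skew-symmetry of $\Omega_1$ (as in Proposition \ref{functional calculus inverse}) guarantees $\tfrac{1}{2}D_1\Omega_1\rho_1\in(\ker L_1)^\perp$, so $\eta_1 := L_1^{-1}\bigl(\tfrac{1}{2}D_1\Omega_1\rho_1\bigr)$ is a well-defined $T$-independent form.

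Conjugating the definition of $\eta$ gives
$$
U_T\eta = (TL_1)^{-1}\cdot\sqrt{T}\,D_1\cdot\tfrac{1}{2}\Omega_1\cdot T^{-n}\rho_1 = T^{-n-1/2}\eta_1,
$$
so by unitarity $\|\eta\|=T^{-n-1/2}\|\eta_1\|$, and combining with $\|\rho\|=T^{-n}\|\rho_1\|$ yields $\|\eta\| = C_1 T^{-1/2}\|\rho\|$ with $C_1 = \|\eta_1\|/\|\rho_1\|\geq 0$ independent of $T$, as claimed. The main technical care lies in the simultaneous verification of unitarity and the conjugation identities for $U_T$—the prefactor $T^{-n}$ must mesh correctly with the distinct scaling behaviors of $\partial_{x_i}$ versus $x_i$ and with the identification of $dx_I$ with $dy_I$—but each check is a one-line computation, so the only real substance is noticing the scaling ansatz.
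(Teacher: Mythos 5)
Your proof is correct, and it takes a genuinely different route from the paper. You conjugate everything by the unitary dilation $U_T$ (whose conjugation identities $U_T\partial_{x_i}U_T^{-1}=\sqrt{T}\,\partial_{y_i}$, $U_T(x_i\cdot)U_T^{-1}=T^{-1/2}y_i\cdot$, together with the linearity of $X_0=A\mathbf{x}$, do give $U_T(d+d^*+T\hat{c}(X_0))U_T^{-1}=\sqrt{T}\,D_1$ and $U_TLU_T^{-1}=TL_1$, including the potential term $T^2\mathbf{x}^{\mathrm{t}}A^*A\mathbf{x}\mapsto T\,\mathbf{y}^{\mathrm{t}}A^*A\mathbf{y}$), reduce to the $T=1$ model, and read off $\|\eta\|=T^{-n-1/2}\|\eta_1\|$, $\|\rho\|=T^{-n}\|\rho_1\|$, hence $C_1=\|\eta_1\|/\|\rho_1\|$. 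The paper instead expands $\tfrac{1}{2}(\omega_0^*-\omega_0)\rho$ in eigenvectors of $L$ with eigenvalues $\lambda_i T$ ($\lambda_i$ independent of $T$), applies $d+d^*+T\hat{c}(X_0)$ explicitly (producing a factor $T$ times terms linear in the coordinates), and extracts the $T^{-1/2}$ by comparing the Gaussian moments $\int x_j^2e^{-T|\mathbf{x}|^2}$ and $\int e^{-T|\mathbf{x}|^2}$. Your scaling argument buys exactness and $T$-independence of $C_1$ for free from unitarity, avoids any appeal to the detailed structure of the eigenvectors of $L$ or of the image of the Gaussian under $D$, and handles the degenerate case $\eta_1=0$ (i.e.\ $C_1=0$) automatically; for uniqueness of the reduced form you can either use your formula $\eta_1=L_1^{-1}\bigl(\tfrac{1}{2}D_1\Omega_1\rho_1\bigr)$ or simply conjugate the defining equation and invoke Proposition \ref{functional calculus inverse} at $T=1$. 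The paper's computation, on the other hand, stays closer to the explicit harmonic-oscillator model that it reuses in the subsequent estimates. Both arguments are valid; yours is the cleaner derivation of this particular identity.
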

\begin{proof}
If $\eta = 0$, we choose $C_1 = 0$. Now, if $\eta\neq 0$, by Proposition \ref{functional calculus inverse}, $\dfrac{1}{2}(\omega_0^*-\omega_0)\rho\neq 0$. 
    Then, we look at (\ref{applying the inverse}). We write $\dfrac{1}{2}(\omega_0^*-\omega_0)\rho$ into a finite sum of eigenvectors of $L$: 
    $$\dfrac{1}{2}(\omega_0^*-\omega_0)\rho = \sum_{i}K_i\cdot\exp\left(-\dfrac{T}{2}\mathbf{x}^\mathrm{t}\sqrt{A^*A}{\mathbf{x}}\right)\cdot\delta_i,$$
    where each $K_i$ is a constant independent of $T$, and each $\delta_i$ is an eigenvector of $L''$ on 
    \begin{align*}
        &\text{span}_\mathbb{R}\{dx_{i_1}\wedge\cdots\wedge dx_{i_r}\wedge dy_{j_1}\wedge\cdots\wedge dy_{j_s}: \ \ \ \ \ \ \ \ \ \ \ \ \ \ \ \ \ \ \ \ \ \ \ \ \ \ \ \ \ \ \ \ \ \ \ \ \ \ \ \ \ \ \ \ \ \ \ \ \ \\ & \ \ \ \ \ \ \ \ \ \ \ 0\leqslant i_1<\cdots< i_r\leqslant 2n, 0\leqslant j_1<\cdots<j_s\leqslant 2n, 0\leqslant r,s\leqslant 2n\}
    \end{align*}
    associated with an eigenvalue $\lambda_i>0$. These $\delta_i$'s and $\lambda_i$'s satisfy $$g_0(\delta_i,\delta_j) = 0\text{\ \ and \ $\lambda_i\neq\lambda_j$\  when $i\neq j$}. $$
    Then, we apply $$L^{-1}\circ(d+d^*+T\hat{c}\left(X_0\right))$$ to $\dfrac{1}{2}(\omega_0^*-\omega_0)\rho$. Since $d+d^*+T\hat{c}\left(X_0\right)$ preserves the eigenspaces of $L$, 
    we obtain 
    \begin{align*}
        & L^{-1}\circ(d+d^*+T\hat{c}\left(X_0\right))\left(\dfrac{1}{2}(\omega_0^*-\omega_0)\rho\right) \\
        =\ & \sum_{i}\dfrac{1}{\lambda_iT}\cdot(d+d^*+T\hat{c}\left(X_0\right))\left(K_i\cdot\exp\left(-\dfrac{T}{2}\mathbf{x}^\mathrm{t}\sqrt{A^*A}{\mathbf{x}}\right)\cdot\delta_i\right).
    \end{align*}
One step further, considering the effect of $d+d^*+T\hat{c}(X_0)$, we find 
    \begin{align*}
       \eta =\ & L^{-1}\circ(d+d^*+T\hat{c}\left(X_0\right))\left(\dfrac{1}{2}(\omega_0^*-\omega_0)\rho\right)\\
        =\ & \sum_{i}\dfrac{1}{\lambda_i T}\cdot T\cdot\exp\left(-\dfrac{T}{2}\mathbf{x}^\mathrm{t}\sqrt{A^*A}{\mathbf{x}}\right)\cdot\left(\sum_{j = 1}^{2n} K_{ij}\cdot x_j\cdot\delta_{ij} + \sum_{j = 1}^{2n} \widetilde{K}_{ij}\cdot y_j\cdot\widetilde{\delta}_{ij}\right)\\
        =\ & \sum_{i}\dfrac{1}{\lambda_i}\cdot\exp\left(-\dfrac{T}{2}\mathbf{x}^\mathrm{t}\sqrt{A^*A}{\mathbf{x}}\right)\cdot\left(\sum_{j = 1}^{2n} K_{ij}\cdot x_j\cdot\delta_{ij} + \sum_{j = 1}^{2n} \widetilde{K}_{ij}\cdot y_j\cdot\widetilde{\delta}_{ij}\right),
    \end{align*}
    where $K_{ij}$'s and $\widetilde{K}_{ij}$'s are constants independent of $T$, and $\delta_{ij}$'s and $\widetilde{\delta}_{ij}$'s are certain linear combinations (with real coefficients independent of $T$) of 
    \begin{align*}
        & \{dx_{i_1}\wedge\cdots\wedge dx_{i_r}\wedge dy_{j_1}\wedge\cdots\wedge dy_{j_s}:\ \ \ \ \ \ \ \ \ \ \ \ \ \ \ \ \ \ \ \ \ \ \ \ \ \ \ \ \ \ \ \ \ \ \ \ \ \ \ \ \ \ \ \ \ \ \ \ \ \ \ \ \ \ \ \\ & \ \ \ \ \ \ \ \ \ \  0\leqslant i_1<\cdots< i_r\leqslant 2n, 0\leqslant j_1<\cdots<j_s\leqslant 2n, 0\leqslant r,s\leqslant 2n\}.
    \end{align*}
   Thus, (\ref{key estimate rho eta}) is essentially the relation between $$\left(\int_{\mathbb{R}^{4n}}x_i^2\exp\left(-{T}|\mathbf{x}|^2\right)dx_1dy_1\cdots dx_{2n}dy_{2n}\right)^{1/2} = \dfrac{\pi^{n}}{T^{n}}\cdot\dfrac{1}{\sqrt{2T}}$$
   and 
   $$\left(\int_{\mathbb{R}^{4n}}\exp\left(-{T}|\mathbf{x}|^2\right)dx_1dy_1\cdots dx_{2n}dy_{2n}\right)^{1/2} = \dfrac{\pi^{n}}{T^{n}}.$$
  Their ratio gives us the factor $T^{-1/2}$. 
\end{proof}
\begin{remark}
\normalfont
    In the standard Witten deformation, the form $\rho$ plays the role as a model for eigenforms associated with small eigenvalues of the deformed Laplacian. In this paper, the pair $(\rho, \eta)$ plays a similar role in the mapping cone Witten deformation. 
\end{remark}

Now, using (\ref{norm on rn no pair}), we define 
the $L^2$-norm (also, the inner product)
\begin{align}\label{inner product on rn}
    \left\|\begin{bmatrix}
        \alpha\\
        \beta
    \end{bmatrix}\right\| =  \left(\|\alpha\|^2+\|\beta\|^2\right)^{1/2}
\end{align}
on $\Omega^\even(\mathbb{R}^{4n})\oplus\Omega^\odd(\mathbb{R}^{4n})$.
Recall the matrix $A$ in (\ref{matrix A}) associated with the zero point $p$. When $\det A > 0$, we study the orthogonal complement of 
$$\text{span}_{\mathbb{R}}\left\{\begin{bmatrix}
    \rho\\
    \eta
\end{bmatrix}\right\}$$
in $\Omega^\even(\mathbb{R}^{4n})\oplus\Omega^\odd(\mathbb{R}^{4n})$ under the inner product induced by (\ref{inner product on rn}).
Let $\begin{bmatrix}
    \alpha\\
    \beta
\end{bmatrix}\in\Omega^\even(\mathbb{R}^{4n})\oplus\Omega^\odd(\mathbb{R}^{4n})$ be an $L^2$-element such that $\begin{bmatrix}
        \alpha\\
        \beta
    \end{bmatrix}$ and $\begin{bmatrix}
        \rho\\
        \eta
    \end{bmatrix}$ are orthogonal to each other. We write
 $$\alpha = r\rho+\alpha'\text{\ \ and\ \ } \beta = s\eta+\beta'$$
such that $\alpha'\perp\rho$ and $\beta'\perp\eta$. Then, we have 
 \begin{align}\label{dependence of r and s}
     r\|\rho\|^2+s\|\eta\|^2 = 0.
 \end{align}
Let $\|\cdot\|_1$ be the 1st Sobolev norm (See \cite[Definition 10.2.7]{nicolaescu2020lectures}) induced by (\ref{norm on rn no pair}). If $\|\alpha\|_1<\infty$ and $\|\beta\|_1<\infty$, we find that $\|\alpha'\|_1<\infty$,  $\|\beta'\|_1<\infty$, and then
 \begin{align}\label{two situations of eta}
     &\left\|\begin{bmatrix}
    \dfrac{1}{2}(\omega_0^*-\omega_0) & -d-d^*-T\hat{c}\left(X_0\right)\\  
    d+d^*+T\hat{c}\left(X_0\right) & \dfrac{1}{2}(\omega_0-\omega_0^*)
\end{bmatrix}\begin{bmatrix}
    \alpha \nonumber \\
    \beta
\end{bmatrix}\right\|\\ \nonumber
\geqslant\ & \left\|\begin{bmatrix}
    & -d-d^*-T\hat{c}(X_0)\\
    d+d^*+T\hat{c}(X_0) & 
\end{bmatrix}\begin{bmatrix}
    r\rho +\alpha'\\
    s\eta+\beta'
\end{bmatrix}\right\|  \\\nonumber
& \ -\left\|\dfrac{1}{2}\begin{bmatrix}
    (\omega_0^*-\omega_0) & \\
    & (\omega_0-\omega_0^*)
\end{bmatrix}\begin{bmatrix}
    r\rho + \alpha'\\
    s\eta + \beta'
\end{bmatrix}\right\|\\ \nonumber
\geqslant\ & \left\|\begin{bmatrix}
    (-d-d^*-T\hat{c}(X_0))(s\eta+\beta')\\
    (d+d^*+T\hat{c}(X_0))(r\rho+\alpha')
\end{bmatrix}\right\|-C_2\left\|\begin{bmatrix}
    r\rho+\alpha'\\
    s\eta+\beta'
\end{bmatrix}\right\|\\ \nonumber
=\ & \left(\|(d+d^*+T\hat{c}(X_0))(s\eta+\beta')\|^2 + \|(d+d^*+T\hat{c}(X_0))\alpha'\|^2\right)^{1/2} - C_2\left\|\begin{bmatrix}
    r\rho+\alpha'\\
    s\eta+\beta'
\end{bmatrix}\right\|  \\\nonumber
&\ \text{\ \ \ (Since $(d+d^*+T\hat{c}(X_0))\rho = 0$)}\\ \nonumber
\geqslant\ & C_3\sqrt{T}\|s\eta+\beta'\|+C_3\sqrt{T}\|\alpha'\|-C_2\left\|\begin{bmatrix}
    r\rho+\alpha'\\
    s\eta+\beta'
\end{bmatrix}\right\| \\ \nonumber
& (\text{By $\|\alpha'\|_1<\infty$, $\|\beta'\|_1<\infty$, and Proposition \ref{proposition of harmonic oscillator with forms}}) \\[8pt] \nonumber
=\ & C_3\sqrt{T}\sqrt{\|s\eta\|^2+\|\beta'\|^2} + C_3\sqrt{T}\|\alpha'\|-C_2\left\|\begin{bmatrix}
    r\rho+\alpha'\\
    s\eta+\beta'
\end{bmatrix}\right\|\\ 
\geqslant\ & C_4\sqrt{T}\|s\eta\|+C_4\sqrt{T}\|\beta'\|+C_3\sqrt{T}\|\alpha'\|-C_2\left\|\begin{bmatrix}
    r\rho+\alpha'\\
    s\eta+\beta'
\end{bmatrix}\right\|.
\end{align}
We approach (\ref{two situations of eta}) in two different cases: 

\vspace{+3mm}
\noindent Case 1: When $\eta \neq 0$:
\begin{align*}
& \text{The last line of (\ref{two situations of eta})}\\
=\ & \dfrac{1}{2}C_4\sqrt{T}\|s\eta\|+\dfrac{1}{2}C_4\sqrt{T}\dfrac{|r|\cdot\|\rho\|^2}{\|\eta\|^2}\|\eta\| + C_4\sqrt{T}\|\beta'\|  + C_3\sqrt{T}\|\alpha'\| - C_2\left\|\begin{bmatrix}
    r\rho+\alpha'\\
    s\eta+\beta'
\end{bmatrix}\right\|\\
=\ & \dfrac{1}{2}C_4\sqrt{T}\|s\eta\|+\dfrac{1}{2}C_4\sqrt{T}\|r\rho\|C_1^{-1}\sqrt{T} + C_4\sqrt{T}\|\beta'\| + C_3\sqrt{T}\|\alpha'\| - C_2\left\|\begin{bmatrix}
    r\rho+\alpha'\\
    s\eta+\beta'
\end{bmatrix}\right\|\\
\geqslant\ & C_5\sqrt{T}\left\|\begin{bmatrix}
    r\rho+\alpha'\\
    s\eta+\beta'
\end{bmatrix}\right\|.
\end{align*}
Case 2: When $\eta = 0$: By (\ref{dependence of r and s}), we find $r = 0$ and then
\begin{align*}
\text{The last line of (\ref{two situations of eta})}
=\ & C_4\sqrt{T}\|\beta'\|+C_3\sqrt{T}\|\alpha'\|-C_2\left\|\begin{bmatrix}
    \alpha'\\
    \beta'
\end{bmatrix}\right\|\\ 
\geqslant\ & C_5\sqrt{T}\left\|\begin{bmatrix}
    \alpha'\\
    \beta' 
\end{bmatrix}\right\|\\
=\ & C_5\sqrt{T}\left\|\begin{bmatrix}
    r\rho+\alpha'\\
    s\eta+\beta' 
\end{bmatrix}\right\|.
 \end{align*}
When $\det A < 0$,  we replace $\begin{bmatrix}
     \rho\\
     \eta
 \end{bmatrix}$ by $\begin{bmatrix}
     \eta\\
     \rho
 \end{bmatrix}$ and repeat the above steps. Then, we summarize: 
 \begin{proposition}\label{higher part estimate}
     There exists a constant $C_5>0$ such that when $\det A > 0$ (resp. when $\det A < 0$), for all sufficiently large $T$, we have
     \begin{align*}
     \left\|\begin{bmatrix}
    \dfrac{1}{2}(\omega_0^*-\omega_0) & -d-d^*-T\hat{c}\left(X_0\right)\\ 
    d+d^*+T\hat{c}\left(X_0\right) & \dfrac{1}{2}(\omega_0-\omega_0^*)
\end{bmatrix}\begin{bmatrix}
    \alpha\\
    \beta
\end{bmatrix}\right\|\geqslant C_5\sqrt{T}\left\|\begin{bmatrix}
          \alpha\\
          \beta
      \end{bmatrix}\right\|
     \end{align*}
     whenever $\begin{bmatrix}
         \alpha\\
         \beta
     \end{bmatrix}\in\Omega^\even(\mathbb{R}^{4n})\oplus\Omega^\odd(\mathbb{R}^{4n})$ is orthogonal to $\begin{bmatrix}
         \rho\\
         \eta
     \end{bmatrix}$ (resp. $\begin{bmatrix}
         \eta\\
         \rho
     \end{bmatrix}$) and satisfies $\|\alpha\|_1<\infty$ and $\|\beta\|_1<\infty$. 
 \end{proposition}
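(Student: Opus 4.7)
The plan is to decompose the input along the one-dimensional ``kernel-like'' direction spanned by $\begin{bmatrix}\rho\\\eta\end{bmatrix}$ (resp. $\begin{bmatrix}\eta\\\rho\end{bmatrix}$) and its orthogonal complement, then exploit the spectral gap given by Proposition \ref{proposition of harmonic oscillator with forms} on the complement, together with the comparison $\|\eta\|=C_1 T^{-1/2}\|\rho\|$ to absorb the symplectic off-diagonal coefficient. I will treat the $\det A>0$ case; the $\det A<0$ case is identical after swapping the two rows.

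First I would write $\alpha = r\rho+\alpha'$ with $\alpha'\perp\rho$ and $\beta = s\eta+\beta'$ with $\beta'\perp\eta$. The single orthogonality hypothesis $\begin{bmatrix}\alpha\\ \beta\end{bmatrix}\perp\begin{bmatrix}\rho\\\eta\end{bmatrix}$ gives exactly one scalar constraint $r\|\rho\|^{2}+s\|\eta\|^{2}=0$. By the triangle inequality I separate the block matrix into its diagonal ``symplectic'' piece $\tfrac{1}{2}\mathrm{diag}(\omega_0^{*}-\omega_0,\,\omega_0-\omega_0^{*})$, whose operator norm is bounded by a constant $C_2$ independent of $T$, and its off-diagonal Dirac-type piece $\pm(d+d^{*}+T\hat c(X_0))$. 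Since $(d+d^{*}+T\hat c(X_0))\rho=0$ by Proposition \ref{proposition of harmonic oscillator with forms}, the Dirac-type piece sees only $s\eta+\beta'$ in one component and $\alpha'$ in the other.

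Now I apply the spectral gap. Proposition \ref{proposition of harmonic oscillator with forms} says every nonzero eigenvalue of $L=(d+d^{*}+T\hat c(X_0))^{2}$ has size $\alpha T$; since both $\alpha'$ and $s\eta+\beta'$ lie in the orthogonal complement of the one-dimensional kernel $\mathbb{R}\rho$, I get lower bounds
\begin{align*}
\|(d+d^{*}+T\hat c(X_0))\alpha'\|\geqslant C_3\sqrt{T}\,\|\alpha'\|,\qquad \|(d+d^{*}+T\hat c(X_0))(s\eta+\beta')\|\geqslant C_3\sqrt{T}\,\|s\eta+\beta'\|.
\end{align*}
After combining the two components of the Dirac part via the Pythagorean identity and subtracting the $C_2$-bounded symplectic term, I arrive at a lower bound of the form $C_4\sqrt{T}(\|s\eta\|+\|\beta'\|+\|\alpha'\|)-C_2\|\cdot\|$. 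The piece I cannot control this way is $\|r\rho\|$, and this is where the main obstacle lies: there is no direct spectral gap against the $\rho$-component. The resolution is to trade the constraint $r\|\rho\|^{2}+s\|\eta\|^{2}=0$ against the key estimate $\|\eta\|=C_1T^{-1/2}\|\rho\|$ from (\ref{key estimate rho eta}); this yields $\|s\eta\|=|r|\|\rho\|^{2}/\|\eta\|=C_1^{-1}\sqrt{T}\,\|r\rho\|$, so half of the $C_4\sqrt{T}\|s\eta\|$ term is actually $\tfrac{C_4}{2}C_1^{-1}T\cdot\|r\rho\|$. For $T$ large this dominates $C_2\|r\rho\|$ and closes the estimate.

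Finally I would split out the degenerate case $\eta=0$: the constraint forces $r=0$, so $\alpha=\alpha'$ and the argument reduces to a direct application of the spectral gap on $\alpha'$ and $\beta'$, with no trade-off needed. Collecting contributions in both cases and choosing $C_5=\min\{C_3,C_4,\tfrac{C_4}{2C_1}\}/2$ (or similar) with $T$ sufficiently large, I obtain the stated lower bound $C_5\sqrt{T}\left\|\begin{bmatrix}\alpha\\\beta\end{bmatrix}\right\|$. The $\det A<0$ case is handled identically after exchanging the roles of $\rho$ and $\eta$ in the two components.
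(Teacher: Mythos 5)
Your proposal is correct and follows essentially the same route as the paper: the same decomposition $\alpha=r\rho+\alpha'$, $\beta=s\eta+\beta'$ with the single constraint $r\|\rho\|^2+s\|\eta\|^2=0$, the triangle-inequality split into the $T$-independent symplectic diagonal and the Dirac off-diagonal part killed on $\rho$, the spectral gap $C_3\sqrt{T}$ from Proposition \ref{proposition of harmonic oscillator with forms}, and the key trade-off via $\|\eta\|=C_1T^{-1/2}\|\rho\|$ to convert the uncontrolled $\|r\rho\|$ into a term of order $T\|r\rho\|$, with the separate $\eta=0$ case and the row swap for $\det A<0$. No substantive differences from the paper's argument.
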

Following \cite{bismutandlebeau} and \cite{wittendeformationweipingzhang}, based on Propositions \ref{proposition of harmonic oscillator with forms}-\ref{higher part estimate}, we apply the asymptotic analysis to carry out the estimates about $\mathbb{D}_T$. Recall (\ref{chart 4 epsilon}) the chart $U$ around each zero point $p$ of $X$. For each zero point $p$, we pick a bump function $\gamma: M\to\mathbb{R}$ such that $$\supp(\gamma)\subseteq U(2\varepsilon) \coloneqq \{(x_1,\cdots,y_{2n}): x_1^2+\cdots+y_{2n}^2 < (2\varepsilon)^2\},$$ and $\gamma = 1$ on $$U(\varepsilon) \coloneqq \{(x_1,\cdots,y_{2n}): x_1^2+\cdots+y_{2n}^2 < \varepsilon^2\}.$$ Furthermore, for each zero point $p$, we let
$$\rho_p = \gamma\cdot\exp\left(-\dfrac{T}{2}\mathbf{x}^\mathrm{t}\sqrt{A^*A}{\mathbf{x}}\right)\cdot\delta$$
and $\eta_p = \gamma\cdot\eta$. As \cite[Definition 9.4]{bismutandlebeau} and \cite[(4.36)]{wittendeformationweipingzhang}, we let $E_T$ be the linear space 
$$\text{span}_\mathbb{R}\left\{\begin{bmatrix}
    \rho_p\\
    \eta_p
\end{bmatrix} (\text{resp. $\begin{bmatrix}
    \eta_p\\
    \rho_p
\end{bmatrix}$}): p \text{\ is a zero point of\ }X \text{\ with $\det A > 0$} \text{\ (resp. $\det A < 0$)}\right\},$$
and $E_T^\perp$ be the orthogonal complement of $E_T$ in $\Omega^\even(M)\oplus\Omega^\odd(M)$. Then, we let $p_T$ (resp. $p_T^\perp$) be the orthogonal projection from $\Omega^\even(M)\oplus\Omega^\odd(M)$ to $E_T$ (resp. $E_T^\perp$). 

Recall the operator $$\mathbb{D}_{T}\coloneqq \begin{bmatrix}
    \dfrac{1}{2}(\omega^*-\omega) & -d-d^*-T\hat{c}(X)\\ 
    d+d^*+T\hat{c}(X) & \dfrac{1}{2}(\omega-\omega^*)
\end{bmatrix}$$ on $\Omega^\even(M)\oplus\Omega^\odd(M)$. 
Then, there is a constant $C_6>0$ such that
\begin{align*}
    \left\|\mathbb{D}_{T}\begin{bmatrix}
        \rho_p\\
        \eta_p
    \end{bmatrix}\right\|
    =\ & \left\|\begin{bmatrix}
    \dfrac{1}{2}(\omega^*-\omega) & -d-d^*-T\hat{c}(X)\\ 
    d+d^*+T\hat{c}(X) & \dfrac{1}{2}(\omega-\omega^*)
\end{bmatrix}\begin{bmatrix}
    \gamma\rho\\
    \gamma\eta
\end{bmatrix}\right\|\\
=\ & \left\|\begin{bmatrix}
    -c(d\gamma)\eta\\
    c(d\gamma)\rho+\dfrac{1}{2}(\omega-\omega^*)\gamma\eta
\end{bmatrix}\right\|\\
    \leqslant\ &\left\|\begin{bmatrix}
        c(d\gamma)\eta\\
        c(d\gamma)\rho
    \end{bmatrix}\right\| + \left\|\begin{bmatrix}
        0\\
        \dfrac{1}{2}(\omega-\omega^*)\gamma\eta
    \end{bmatrix}\right\|\\
    \leqslant\ & C_6\left\|\begin{bmatrix}
        \rho_p\\
        \eta_p
    \end{bmatrix}\right\|
\end{align*}
 when $T$ is sufficiently large. Summarizing this estimate, we get: 
\begin{proposition}\label{spectral left}
    There is a constant $C_6>0$ such that when $T$ is sufficiently large, 
    $$\left\|\mathbb{D}_{T}\begin{bmatrix}
        \alpha\\
        \beta
    \end{bmatrix}\right\|\leqslant C_6\cdot\left\|\begin{bmatrix}
        \alpha\\
        \beta
    \end{bmatrix}\right\|$$
    for all $\begin{bmatrix}
        \alpha\\
        \beta
    \end{bmatrix}\in E_T$. 
\end{proposition}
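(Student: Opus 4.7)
The plan is to extract the statement directly from the computation that already precedes it, then promote it from a single generator to arbitrary elements of $E_T$ by linearity plus a disjointness of supports argument. The displayed estimate just before the proposition already shows that for the generator $\begin{bmatrix}\rho_p\\\eta_p\end{bmatrix}$ associated with a zero point $p$ satisfying $\det A>0$, one has $\|\mathbb{D}_T\begin{bmatrix}\rho_p\\\eta_p\end{bmatrix}\|\leqslant C_6\|\begin{bmatrix}\rho_p\\\eta_p\end{bmatrix}\|$ for all sufficiently large $T$, and a verbatim repetition with the roles of $\rho$ and $\eta$ swapped yields the same bound for $\begin{bmatrix}\eta_p\\\rho_p\end{bmatrix}$ when $\det A<0$. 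So the content that still needs writing up is just how this promotes to a norm bound on the whole space $E_T$.

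First, I would note that since $V$ (and hence $X$) is nondegenerate, its zero set is discrete, and since $M$ is closed, this zero set is finite. By shrinking $\varepsilon$ if necessary, I can arrange the charts $U$ around distinct zero points to be pairwise disjoint, so that the bump functions $\gamma$ for distinct zero points have disjoint supports. Consequently, the generators
\[
\begin{bmatrix}\rho_p\\\eta_p\end{bmatrix}\text{ and }\begin{bmatrix}\eta_p\\\rho_p\end{bmatrix},
\]
as $p$ ranges over the zero points (with the sign of $\det A$ dictating which of the two appears), form a collection of elements of $\Omega^\even(M)\oplus\Omega^\odd(M)$ with pairwise disjoint supports; in particular, they are mutually orthogonal with respect to the $L^2$-inner product, and the images under $\mathbb{D}_T$ also have pairwise disjoint supports, since $\mathbb{D}_T$ is a first-order differential operator (coupled with the zero-order terms $\omega$, $\omega^*$ and $\hat c(X)$, all of which are local).

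Next, write an arbitrary $\begin{bmatrix}\alpha\\\beta\end{bmatrix}\in E_T$ as a finite linear combination $\sum_p c_p\,\xi_p$, where $\xi_p$ is the generator attached to $p$. By orthogonality and disjoint supports,
\[
\left\|\begin{bmatrix}\alpha\\\beta\end{bmatrix}\right\|^2=\sum_p c_p^2\,\|\xi_p\|^2,\qquad \left\|\mathbb{D}_T\begin{bmatrix}\alpha\\\beta\end{bmatrix}\right\|^2=\sum_p c_p^2\,\|\mathbb{D}_T\xi_p\|^2.
\]
Applying the single-generator estimate $\|\mathbb{D}_T\xi_p\|\leqslant C_6\|\xi_p\|$ termwise gives $\|\mathbb{D}_T\begin{bmatrix}\alpha\\\beta\end{bmatrix}\|^2\leqslant C_6^2\|\begin{bmatrix}\alpha\\\beta\end{bmatrix}\|^2$, which is the claim.

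There is no real obstacle in this step: the only point that needs any care is that the constant $C_6$ coming from the single-generator computation involves $\|d\gamma\|_{\infty}$ and the $L^\infty$-norm of $\omega-\omega^*$ on the (compact) chart, so it is a genuine constant independent of $T$ once $\varepsilon$ is fixed, and the finiteness of the zero set allows taking one uniform constant across all zero points. The asymptotic statement ``for $T$ sufficiently large'' is inherited directly from the single-generator calculation, since the threshold only depends on the local model at each zero point and there are finitely many such models.
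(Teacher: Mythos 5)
Your proposal is correct and follows essentially the same route as the paper: the single-generator bound is exactly the computation the paper carries out just before the proposition, and your extension to all of $E_T$ via disjoint supports of the bump functions around the (finitely many) zero points, orthogonality, and locality of $\mathbb{D}_T$ is precisely the step the paper leaves implicit in ``Summarizing this estimate.'' No gaps.
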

Now, if $\begin{bmatrix}
    \alpha\\
    \beta
\end{bmatrix}\in E_T^\perp$, we have the following estimate similar to those in \cite[Theorem 9.11]{bismutandlebeau} and \cite[Proposition 4.12]{wittendeformationweipingzhang}: 
\begin{proposition}\label{spectral right}
    There exists a constant $C_{11}>0$ such that when $T$ is sufficiently large, 
    $$\left\|\mathbb{D}_{T}\begin{bmatrix}
        \alpha\\
        \beta
    \end{bmatrix}\right\|\geqslant C_{11}\sqrt{T}\left\|\begin{bmatrix}
        \alpha\\
        \beta
    \end{bmatrix}\right\|$$ for all $\begin{bmatrix}
        \alpha\\
        \beta
    \end{bmatrix}\in E_T^\perp$. 
\end{proposition}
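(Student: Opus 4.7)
The plan is to follow the Bismut--Lebeau asymptotic-analysis pattern of \cite[Theorem 9.11]{bismutandlebeau} and \cite[Proposition 4.12]{wittendeformationweipingzhang}: localize with smooth cutoffs around the (finite) zero set of $X$, invoke Proposition \ref{higher part estimate} on each model chart, and control the complementary region via the $T^2|X|^2$ term produced by squaring $\mathbb{D}_T$.

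First I would choose smooth $\phi_1, \phi_2 : M \to [0,1]$ with $\phi_1^2 + \phi_2^2 = 1$, $\phi_1 = 1$ on $\bigcup_p U(\varepsilon)$, and $\supp(\phi_1) \subseteq \bigcup_p U(2\varepsilon)$. Since each commutator $[\mathbb{D}_T, \phi_i]$ is a zeroth-order Clifford-type operator whose $L^\infty$-norm is bounded independently of $T$, a standard IMS-type identity gives
\begin{equation*}
\Bigl\|\mathbb{D}_T \begin{bmatrix}\alpha\\\beta\end{bmatrix}\Bigr\|^2 \geq \Bigl\|\mathbb{D}_T\bigl(\phi_1 \begin{bmatrix}\alpha\\\beta\end{bmatrix}\bigr)\Bigr\|^2 + \Bigl\|\mathbb{D}_T\bigl(\phi_2 \begin{bmatrix}\alpha\\\beta\end{bmatrix}\bigr)\Bigr\|^2 - C'\, \Bigl\|\begin{bmatrix}\alpha\\\beta\end{bmatrix}\Bigr\|^2
\end{equation*}
with $C'$ a $T$-independent constant. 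For the $\phi_2$ piece, a Bochner--Weitzenb\"ock expansion of $-\mathbb{D}_T^2$ on each diagonal block produces the scalar potential $T^2|X|^2$ together with at most linear-in-$T$ Clifford commutator terms and the Hodge Laplacian. Since $|X|^2 \geq c_0 > 0$ on $\supp(\phi_2)$, the $T^2|X|^2$ summand dominates, yielding $\|\mathbb{D}_T(\phi_2 [\alpha,\beta])\|^2 \geq c_1 T \,\|\phi_2 [\alpha,\beta]\|^2$ for $T$ large.

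For the $\phi_1$ piece, I would split further by the cutoffs $\gamma_p$ so that each summand is supported in a single chart $U_p$. On $U_p$, the choices (1)--(3) preceding Proposition \ref{proposition of harmonic oscillator with forms} ensure that $\omega$, $g$, and $X$ coincide with their Euclidean/model counterparts, so after extension by zero the summand lies in $\Omega^{\even}(\mathbb{R}^{4n})\oplus\Omega^{\odd}(\mathbb{R}^{4n})$ and $\mathbb{D}_T$ reduces to the model operator of Proposition \ref{higher part estimate}. The hypothesis $[\alpha,\beta] \in E_T^\perp$ provides orthogonality against the truncated Gaussians $\begin{bmatrix}\rho_p\\\eta_p\end{bmatrix}$ (or its swap). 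Because $\rho_p, \eta_p$ differ from the exact model $\rho, \eta$ only outside $U(\varepsilon)$, where the Gaussian weight $\exp(-T\mathbf{x}^{\mathrm{t}}\sqrt{A^*A}\mathbf{x}/2)$ is $O(e^{-\alpha T})$ in $L^2$, that orthogonality transfers to the global $\begin{bmatrix}\rho\\\eta\end{bmatrix}$ up to errors of the same exponentially small order. Proposition \ref{higher part estimate} then yields $\|\mathbb{D}_T(\gamma_p\phi_1 [\alpha,\beta])\| \geq C_5\sqrt{T}\,\|\gamma_p\phi_1 [\alpha,\beta]\|$ modulo those $O(e^{-\alpha T})$ corrections.

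Summing the chart contributions and combining with the $\phi_2$ bound gives
\begin{equation*}
\Bigl\|\mathbb{D}_T \begin{bmatrix}\alpha\\\beta\end{bmatrix}\Bigr\|^2 \geq C_7 \,T\, \Bigl\|\begin{bmatrix}\alpha\\\beta\end{bmatrix}\Bigr\|^2 - C'\,\Bigl\|\begin{bmatrix}\alpha\\\beta\end{bmatrix}\Bigr\|^2,
\end{equation*}
from which the claimed $C_{11}\sqrt{T}$ lower bound follows for all sufficiently large $T$. The main obstacle, as usual in the Bismut--Lebeau framework, is the near-orthogonality step on each chart: one must transfer the exact orthogonality against the truncated $\begin{bmatrix}\rho_p\\\eta_p\end{bmatrix}$ into orthogonality against the true global model generator $\begin{bmatrix}\rho\\\eta\end{bmatrix}$ on $\mathbb{R}^{4n}$ in a way that keeps the $\sqrt{T}$ spectral gap of Proposition \ref{higher part estimate} intact. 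The Gaussian decay of $\rho, \eta$ is precisely what makes the discrepancy exponentially small, so that it can be absorbed into the lower-order $O(1)$ error term $C'\|[\alpha,\beta]\|^2$ and not into the leading $\sqrt{T}$-asymptotic.
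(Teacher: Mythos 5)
Your proposal is correct and follows essentially the same route as the paper: an estimate away from the zero set using $T^2|X|^2\geq c_0T^2$, a per-chart reduction to the model operator on $\mathbb{R}^{4n}$ where Proposition \ref{higher part estimate} applies after transferring orthogonality from the truncated $\begin{bmatrix}\rho_p\\\eta_p\end{bmatrix}$ to the global Gaussian generator up to exponentially small errors, and a cutoff/commutator gluing that the paper delegates to Step 3 of Zhang's Proposition 4.12 and you spell out as an IMS-type identity. The only cosmetic difference is that you make the gluing step explicit rather than citing it.
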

\begin{proof}
     We perform the next three steps: 
     
\noindent Step 1: If $\begin{bmatrix}
    \alpha\\
    \beta
\end{bmatrix}$ is supported outside all $U(2\varepsilon)$'s, the minimum of $g(X,X)$ is greater than $0$. Then, similar to \cite[Proposition 4.7]{wittendeformationweipingzhang}, we find 
\begin{align*}
    & \left\|\mathbb{D}_{T}\begin{bmatrix}
        \alpha\\
        \beta
    \end{bmatrix}\right\|\\
    \geqslant\ & \left\|\begin{bmatrix}
        & -d-d^*-T\hat{c}(X)\\
        d+d^*+T\hat{c}(X) & 
    \end{bmatrix}\begin{bmatrix}
        \alpha\\
        \beta
    \end{bmatrix}\right\|  - \left\|\begin{bmatrix}
        \dfrac{1}{2}(\omega^*-\omega) & \\
        & \dfrac{1}{2}(\omega-\omega^*)
    \end{bmatrix}\begin{bmatrix}
        \alpha\\
        \beta
    \end{bmatrix}\right\|\\
    \geqslant\ & C_7 T\left\|\begin{bmatrix}
        \alpha\\
        \beta
    \end{bmatrix}\right\|- C_8\left\|\begin{bmatrix}
        \alpha\\
        \beta
    \end{bmatrix}\right\|. 
\end{align*}

\noindent Step 2: If $\begin{bmatrix}
    \alpha\\
    \beta
\end{bmatrix}$ is supported inside the chart $U$ centered at some zero point $p$, we view $\begin{bmatrix}
    \alpha\\
    \beta
\end{bmatrix}$ as an element in $\Omega^\even(\mathbb{R}^{4n})\oplus\Omega^\odd(\mathbb{R}^{4n})$. Let $p_T'$ be the orthogonal projection from $\Omega^\even(\mathbb{R}^{4n})\oplus\Omega^\odd(\mathbb{R}^{4n})$ to the one-dimensional space generated by $\begin{bmatrix}
    \rho\\
    \eta
\end{bmatrix}$. Let $\langle\cdot,\cdot\rangle$ denote the inner product induced by (\ref{inner product on rn}), we have 
\begin{align*}
    p_T'\begin{bmatrix}
        \alpha\\
        \beta
    \end{bmatrix} =\ & \dfrac{1}{\sqrt{\|\rho\|^2+\|\eta\|^2}}\left\langle\begin{bmatrix}
        \rho\\
        \eta
    \end{bmatrix},\begin{bmatrix}
        \alpha\\
        \beta
    \end{bmatrix}\right\rangle\cdot\left(\dfrac{1}{\sqrt{\|\rho\|^2+\|\eta\|^2}}\begin{bmatrix}
        \rho\\
        \eta
    \end{bmatrix}\right)\\
    =\ & \dfrac{1}{\|\rho\|^2+\|\eta\|^2}\left\langle\begin{bmatrix}
        \rho\\
        \eta
    \end{bmatrix},\begin{bmatrix}
        \alpha\\
        \beta
    \end{bmatrix}\right\rangle\cdot\begin{bmatrix}
        \rho\\
        \eta
    \end{bmatrix}\\
    =\ & \dfrac{1}{\|\rho\|^2+\|\eta\|^2}\int_M (1-\gamma)\cdot g\left(\begin{bmatrix}
        \rho\\
        \eta
    \end{bmatrix},\begin{bmatrix}
        \alpha\\
        \beta
    \end{bmatrix}\right)\dvol\cdot\begin{bmatrix}
        \rho\\
        \eta
    \end{bmatrix} \ \ \\
    & \left(\text{Since}\ \left\langle\begin{bmatrix}
        \gamma\rho\\
        \gamma\eta
    \end{bmatrix},\begin{bmatrix}
        \alpha\\
        \beta
    \end{bmatrix}\right\rangle = 0\right). 
\end{align*}
Then, we find 
\begin{align*}
    & \left\|p_T'\begin{bmatrix}
        \alpha\\
        \beta
    \end{bmatrix}\right\| \\ =\  & \dfrac{1}{\sqrt{\|\rho\|^2+\|\eta\|^2}}\left|\int_M (1-\gamma)\cdot g\left(\begin{bmatrix}
        \rho\\
        \eta
    \end{bmatrix},\begin{bmatrix}
        \alpha\\
        \beta
    \end{bmatrix}\right)\dvol\right|\\
    \leqslant\ & \dfrac{1}{\sqrt{\|\rho\|^2+\|\eta\|^2}} \cdot\exp\left(-C_9\varepsilon^2 T\right)\int_{|{\mathbf{x}}|\leqslant 4\varepsilon} g\left(\begin{bmatrix}
        \alpha\\
        \beta
    \end{bmatrix},\begin{bmatrix}
        \alpha\\
        \beta
    \end{bmatrix}\right)^{1/2}\dvol \ \ \text{(By Cauchy-Schwarz)}\\
    \leqslant\ & \dfrac{\sqrt{T}}{\sqrt{T+C_1^2}}\cdot\|\rho\|^{-1}\cdot\exp\left(-C_9\varepsilon^2 T\right)\cdot\left\|\begin{bmatrix}
        \alpha\\
        \beta
    \end{bmatrix}\right\| \\
    \leqslant\ & \exp(-C_{10}T)\cdot\left\|\begin{bmatrix}
        \alpha\\
        \beta
    \end{bmatrix}\right\|\ \ \left(\text{By comparing $\|\rho\|$ with $\exp\left(-C_9\varepsilon^2 T\right)$}\right). 
\end{align*}
By Proposition \ref{higher part estimate}, we find 
\begin{align*}
    \left\|\mathbb{D}_{T}\begin{bmatrix}
        \alpha\\
        \beta
    \end{bmatrix}\right\| \geqslant\  & C_5\sqrt{T}\left\|\begin{bmatrix}
        \alpha\\
        \beta
    \end{bmatrix}-p_T'\begin{bmatrix}
        \alpha\\
        \beta
    \end{bmatrix}\right\|
    \geqslant C_5\sqrt{T}\cdot\left(1-\exp\left(-C_{10}T\right)\right)\cdot\left\|\begin{bmatrix}
        \alpha\\
        \beta
    \end{bmatrix}\right\|. 
\end{align*}

\noindent Step 3: The general $\begin{bmatrix}
    \alpha\\
    \beta
\end{bmatrix}\in E_T^\perp$ supported on $M$: We combine what we have verified in Step 1 and Step 2, following the standard procedure in the Step 3 of the proof of \cite[Proposition 4.12]{wittendeformationweipingzhang}. 
\end{proof}

Notice that $\mathbb{D}_T$ is skew-adjoint, we have: 
\begin{proposition}\label{spectrum result}
    The operator $-\mathbb{D}_{T}^2$ is self-adjoint and nonnegative. When $T$ is sufficiently large, the eigenvalues of $-\mathbb{D}_T^2$ lie in the union $[0, C_6^2]\cup [C_{11}^2 T, +\infty)$. 
\end{proposition}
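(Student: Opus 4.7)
The plan is to deduce the two-interval structure of the spectrum from the norm estimates in Propositions \ref{spectral left} and \ref{spectral right} by means of the Courant--Fischer min-max principle. First I would note that $\mathbb{D}_T$ is formally skew-adjoint (Proposition \ref{proposition skew-adjoint signature operator} handles the zeroth-order symplectic piece, while $d+d^*+T\hat{c}(X)$ is formally self-adjoint) and first-order elliptic on the closed manifold $M$, since the Witten deformation and the zeroth-order symplectic terms do not alter the principal symbol. Hence $-\mathbb{D}_T^2 = \mathbb{D}_T^*\mathbb{D}_T$ is a nonnegative self-adjoint elliptic operator of order two with compact resolvent, and therefore possesses a purely discrete spectrum $0\leqslant \mu_1\leqslant \mu_2\leqslant\cdots\to+\infty$.

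Now let $N = \dim_{\mathbb{R}} E_T$; this is finite and equal to the number of zero points of $X$. I would apply the min-max characterization
$$\mu_k = \min_{\dim V = k}\ \max_{0\neq x\in V}\ \frac{\|\mathbb{D}_T x\|^2}{\|x\|^2}$$
with the test subspace $V = E_T$ and use Proposition \ref{spectral left} to conclude $\mu_N \leqslant C_6^2$, so that the first $N$ eigenvalues all lie in $[0,C_6^2]$. For the remaining eigenvalues I would invoke the dual form
$$\mu_{N+1} = \max_{\dim W = N}\ \min_{0\neq x\in W^\perp}\ \frac{\|\mathbb{D}_T x\|^2}{\|x\|^2}$$
with $W = E_T$ (so $W^\perp = E_T^\perp$), and Proposition \ref{spectral right} then forces $\mu_{N+1}\geqslant C_{11}^2 T$. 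Choosing $T$ large enough that $C_{11}^2 T > C_6^2$ makes the two intervals disjoint and delivers the claimed decomposition $[0,C_6^2]\cup[C_{11}^2 T,+\infty)$.

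The only point that needs a moment of care is checking that the Courant--Fischer principle legitimately applies to the unbounded operator $-\mathbb{D}_T^2$; this is standard once one has compact resolvent, and moreover the generators $\rho_p,\eta_p$ (and their swapped versions when $\det A < 0$) that span $E_T$ are smooth forms with compact support inside the charts $U$, so they certainly lie in the domain and the test-subspace step is unproblematic. No new a priori estimate beyond Propositions \ref{spectral left} and \ref{spectral right} is needed, making this essentially a packaging argument rather than a new analytic input.
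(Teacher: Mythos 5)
Your argument is correct, and it reaches the statement by a somewhat different route than the paper. The paper's own proof is essentially a pointer to the literature: it combines Propositions \ref{spectral left} and \ref{spectral right} ``following the same pattern'' as the corresponding lemma in Zhang's book, i.e.\ the usual Witten-deformation argument that excludes intermediate spectrum by working with the block decomposition of $\mathbb{D}_T$ relative to $E_T\oplus E_T^\perp$ and a resolvent-type estimate. You instead observe that $-\mathbb{D}_T^2=\mathbb{D}_T^*\mathbb{D}_T$ is a nonnegative self-adjoint second-order elliptic operator on the closed manifold $M$, hence has discrete spectrum $\mu_1\leqslant\mu_2\leqslant\cdots$, and you apply Courant--Fischer: the test space $E_T$ (of dimension $N$, spanned by smooth forms supported in the charts, hence in the form domain) gives $\mu_N\leqslant C_6^2$ via Proposition \ref{spectral left}, while the dual max--min formula with $W=E_T$ gives $\mu_{N+1}\geqslant C_{11}^2T$ via Proposition \ref{spectral right}; for $T$ large this is exactly the asserted splitting. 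This variational packaging is legitimate, uses nothing beyond the two quoted estimates, and as a bonus shows that exactly $\dim E_T$ eigenvalues (with multiplicity) lie in $[0,C_6^2]$, i.e.\ $\dim\widetilde{E}_T=\dim E_T$, which the paper proves separately in Proposition \ref{checking same dimension of kernel spaces}. Two details worth stating explicitly: Proposition \ref{spectral right} is formulated for smooth forms orthogonal to $E_T$, so in the max--min step you should note that the estimate extends to all $H^1$ elements orthogonal to $E_T$ by density (the orthogonal projection onto the finite-dimensional space $E_T$ has smooth range, so smooth forms orthogonal to $E_T$ are $H^1$-dense there); and the skew-adjointness of $\mathbb{D}_T$ follows directly from $\omega\wedge$ and $\omega^*\lrcorner$ being mutual adjoints together with $\hat{c}(X)$ being self-adjoint, not from Proposition \ref{proposition skew-adjoint signature operator}, which concerns the $\hat{c}(\dvol)$-twisted operator.
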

\begin{proof}
   This is a combination of Proposition \ref{spectral left} and Proposition \ref{spectral right}, following the same pattern as in the proof of \cite[Lemma 5.3]{weipingzhangnewedition}. Since there is no essential spectrum here, we only need a simplified procedure like in the proof of \cite[Proposition 6.18]{zhuang2025analytictopologicalrealizationsinvariant}. 
\end{proof}

\section{Counting formula}\label{Counting formula section}
In this section, we prove the counting formula (\ref{main result main formula}) in Theorem \ref{counting formula theorem}. 
Let $\widetilde{E}_T$ be the sum of eigenspaces of $-\mathbb{D}_{T}^2$ on $\Omega^\even(M)\oplus\Omega^\odd(M)$ associated with eigenvalues in $[0,C_6^2]$. Then,
\begin{align*}
\kappa(M,\omega)
    =\ & \ind_2\left(\mathbb{D}_{T} \text{\ on\ }\Omega^\even(M)\oplus\Omega^\odd(M)\right) \\
    =\ & \hspace{-1mm}\dim\ker\left(-\mathbb{D}_{T}^2 \text{\ on\ }\Omega^\even(M)\oplus\Omega^\odd(M)\right) \hspace{-2.5mm}\mod 2\\
    =\ & \hspace{-1mm}\dim\ker\left(\mathbb{D}_{T}: \widetilde{E}_T\to\widetilde{E}_T\right) \hspace{-2.5mm}\mod 2 \\
    &\text{(Since each eigenspace of $-\mathbb{D}_T^2$ is invariant under $\mathbb{D}_T$).}
\end{align*}
By \cite[Section 8.16]{greub1981linear}, every $r\times r$ skew-symmetric matrix has Atiyah-Singer mod 2 index equal to the parity of $r$. Thus, $$\kappa(M,\omega) = \dim\widetilde{E}_T \mod 2.$$
Now, to prove Theorem \ref{counting formula theorem}, we only need to show that $\dim E_T = \dim\widetilde{E}_T$. 
\begin{proposition}\label{checking same dimension of kernel spaces}
    We have $\dim E_T = \dim\widetilde{E}_T$ when $T$ is sufficiently large.
\end{proposition}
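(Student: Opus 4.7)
The plan is to prove $\dim E_T = \dim \widetilde{E}_T$ for $T$ sufficiently large by showing that the two orthogonal projections $E_T \to \widetilde{E}_T$ and $\widetilde{E}_T \to E_T$ are both injective. This is the standard two-sided comparison used in Bismut--Lebeau asymptotic analysis (cf. \cite[Theorem 9.11]{bismutandlebeau} and \cite[Proposition 4.12]{wittendeformationweipingzhang}), and all of the analytic input is already in place from Propositions \ref{spectral left}--\ref{spectrum result}.

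To obtain $\dim E_T \leq \dim \widetilde{E}_T$, I would take $v \in E_T$ whose orthogonal projection onto $\widetilde{E}_T$ vanishes. Then $v$ lies in the orthogonal complement of $\widetilde{E}_T$, which by Proposition \ref{spectrum result} is the sum of eigenspaces of $-\mathbb{D}_T^2$ with eigenvalues at least $C_{11}^2 T$. Since $\mathbb{D}_T$ is skew-adjoint, one has $\|\mathbb{D}_T v\|^2 = \langle -\mathbb{D}_T^2 v,v\rangle$, and the spectral theorem therefore yields $\|\mathbb{D}_T v\|^2 \geq C_{11}^2 T\,\|v\|^2$. On the other hand, Proposition \ref{spectral left} gives $\|\mathbb{D}_T v\| \leq C_6\,\|v\|$. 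For $T > C_6^2/C_{11}^2$ the two bounds force $v = 0$, so the projection $E_T \to \widetilde{E}_T$ is injective.

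For the reverse inequality $\dim \widetilde{E}_T \leq \dim E_T$, I would take $v \in \widetilde{E}_T$ whose orthogonal projection onto $E_T$ is zero, so that $v \in E_T^\perp$. Proposition \ref{spectral right} then gives $\|\mathbb{D}_T v\| \geq C_{11}\sqrt{T}\,\|v\|$, while membership in $\widetilde{E}_T$ together with the same spectral identity gives $\|\mathbb{D}_T v\| \leq C_6\,\|v\|$. Once more these force $v = 0$ for $T > C_6^2/C_{11}^2$, so $\widetilde{E}_T \to E_T$ is injective and $\dim \widetilde{E}_T \leq \dim E_T$. Combining both directions yields $\dim E_T = \dim \widetilde{E}_T$.

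There is no real obstacle here: the argument reduces to routine linear algebra once the spectral gap of Proposition \ref{spectrum result} and the two a priori bounds in Propositions \ref{spectral left} and \ref{spectral right} are available. The only conceptual point worth recording explicitly is that, because $\mathbb{D}_T$ is skew-adjoint, an eigenvector of $-\mathbb{D}_T^2$ with eigenvalue $\mu$ satisfies $\|\mathbb{D}_T v\|^2 = \mu\,\|v\|^2$; this is what converts the spectral intervals $[0,C_6^2]$ and $[C_{11}^2 T,+\infty)$ into the operator-norm estimates that drive the two injectivity arguments.
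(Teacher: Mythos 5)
Your proposal is correct and takes essentially the same route as the paper: both inequalities are obtained from the spectral gap of Proposition \ref{spectrum result} combined with the bounds of Propositions \ref{spectral left} and \ref{spectral right}, which is exactly the paper's two-sided projection comparison. The only cosmetic difference is that the paper proves $\dim\widetilde{E}_T\leqslant\dim E_T$ by choosing $\varphi\in\widetilde{E}_T$ orthogonal to $\widetilde{P}_TE_T$ and deriving a contradiction, while you show directly that $\widetilde{E}_T\cap E_T^{\perp}=\{0\}$; these amount to the same argument.
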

\begin{proof}
    Recall the $L^2$-norm (\ref{L2 on pair}) on $\Omega^\even(M)\oplus\Omega^\odd(M)$. We let $$\widetilde{P}_T:\Omega^\even(M)\oplus\Omega^\odd(M)\to\widetilde{E}_T$$
    be the orthogonal projection to $\widetilde{E}_T.$ Then, for any $h\in E_T$, we obtain
    \begin{align*}
         \|h-\widetilde{P}_Th\|
        \leqslant\ & \dfrac{1}{C_{11}\sqrt{T}}\|\mathbb{D}_T(h-\widetilde{P}_Th)\|\ \ (\text{by Proposition \ref{spectrum result}})\\
        \leqslant\ & \dfrac{1}{C_{11}\sqrt{T}}\left(\|\mathbb{D}_Th\|+\|\mathbb{D}_T\widetilde{P}_Th\|\right)\\
        \leqslant\ & \dfrac{1}{C_{11}\sqrt{T}}\cdot C_6\cdot (\|h\|+\|h\|)\ \ \text{(by Proposition \ref{spectrum result})}.
    \end{align*}
    Thus, when $T$ is large, $\widetilde{P}_T$ maps $E_T$ injectively into $\widetilde{E}_T$, meaning that $\dim\widetilde{E}_T\geqslant\dim E_T$. 

\vspace{+1mm}
    Now, similar to \cite[(5.32)]{wittendeformationweipingzhang}, suppose that $\dim\widetilde{E}_T > \dim E_T$, we pick some $\varphi\in \widetilde{E}_T$ such that $\varphi$ is orthogonal to the space $\widetilde{P}_TE_T$. Let $\langle\cdot,\cdot\rangle$ denote the inner product induced by (\ref{L2 on pair}), for any zero point $p$ of $X$ and the associated $\begin{bmatrix}
        \rho_p\\
        \eta_p
    \end{bmatrix}$ (or $\begin{bmatrix}
        \eta_p\\
        \rho_p
    \end{bmatrix}$ depending on the sign of $\det A$), we have 
    \begin{align*}
        \left\langle\varphi,\begin{bmatrix}
            \rho_p\\
            \eta_p
        \end{bmatrix}\right\rangle
        =\ & \left\langle\varphi,\begin{bmatrix}
            \rho_p\\
            \eta_p
        \end{bmatrix}\right\rangle - \left\langle\varphi,\widetilde{P}_T\begin{bmatrix}
            \rho_p\\
            \eta_p
        \end{bmatrix}\right\rangle\\
        =\ & \left\langle\varphi,\begin{bmatrix}
            \rho_p\\
            \eta_p
        \end{bmatrix}\right\rangle - \left\langle\varphi, \begin{bmatrix}
            \rho_p\\
            \eta_p
        \end{bmatrix}\right\rangle\ \ \text{(Since $\varphi\in\widetilde{E}_T$)}\\
        =\ & 0. 
    \end{align*}
    Thus, $\varphi\in E_T^\perp$. By Proposition \ref{spectral right}, 
    $$\|\mathbb{D}_{T}\varphi\|\geqslant C_{11}\sqrt{T}\|\varphi\|,$$
    contradicting to the fact that 
    \begin{align*}
        \varphi\in\widetilde{E}_T =\ & \text{the sum of the eigenspaces of $-\mathbb{D}_{T}^2$} \text{associated with eigenvalues in $[0,C_6^2]$}. 
    \end{align*}
    Therefore, $\widetilde{E}_T$ is isomorphic to $E_T$ when $T$ is sufficiently large. 
\end{proof}

Recall that $X$ is adjusted from $V$, by Proposition \ref{checking same dimension of kernel spaces}, we finally have
    \begin{align*}
    \kappa(M,\omega)
        =\ & \hspace{-1mm}\dim\widetilde{E}_T \mod 2\\
    =\ & \hspace{-1mm}\dim E_T \mod 2\\
    =\ & \text{the number of zero points of the adjusted vector field}\ X \mod 2\\
    =\ & \text{the number of zero points of the original vector field}\ V \mod 2\ ,
    \end{align*}
and this completes the proof of Theorem \ref{counting formula theorem}.

\begin{remark}\label{atiyah trick}
\normalfont
We get Corollary \ref{vanishing theorem} from Theorem \ref{counting formula theorem}. Actually, instead of using Theorem \ref{counting formula theorem}, we can also apply Atiyah's perturbation technology in \cite[Section 4]{atiyah2013vector} to prove Corollary \ref{vanishing theorem} directly.
    Let $V$ be a vector field with $g(V,V) = 1$ on $M$. Then, we perturb the operator
    $$D = \begin{bmatrix}
        0 & 1\\
        1 & 0
    \end{bmatrix}\begin{bmatrix}
        \hat{c}(\dvol) & \\
        & \hat{c}(\dvol)
    \end{bmatrix}\begin{bmatrix}
        d+d^* & \omega\\
        \omega^* & -d-d^*
    \end{bmatrix}$$
    on $\Omega^\even(M)\oplus\Omega^\odd(M)$ into the operator $$D' = D + \begin{bmatrix}
        0 & 1\\
        1 & 0
    \end{bmatrix}\begin{bmatrix}
        \hat{c}(V) & \\
        & -\hat{c}(V)
    \end{bmatrix} D \begin{bmatrix}
        \hat{c}(V) & \\
        & -\hat{c}(V)
    \end{bmatrix}\begin{bmatrix}
        0 & 1\\
        1 & 0
    \end{bmatrix}.$$
    Once we verify that $\ind_2 D = \ind_2 D'$ and that $\ker D'$ admits a complex structure given by 
    $$\left(\begin{bmatrix}
        0 & 1\\
        1 & 0
    \end{bmatrix}\begin{bmatrix}
        \hat{c}(V) & \\
        & -\hat{c}(V)
    \end{bmatrix}\right)^2 = \begin{bmatrix}
        -1 & 0\\
        0 & -1
    \end{bmatrix},$$
    we conclude that $\dim\ker D'$ is even, and therefore $\kappa(M,\omega) = 0$. 
\end{remark}

\section{Examples and discussions}\label{Examples section}
In this section, we present some examples. Most of them have already been studied in other papers, and we just adapt them into the computation of symplectic semi-characteristics. After these examples, we propose some further discussions. 
\begin{example}\normalfont
    We let $M = \cptwo$ equipped with the Fubini-Study form \cite[Homework 12]{daSilva2008}. According to \cite[Example 4.2]{tangtsengclausensymplecticwitten}, 
    $$b_0^\omega = 1, b_2^\omega = 0, b_4^\omega = 0,$$
    meaning that $\kappa(M,\omega) = 1$. 
    These $b_i^\omega$\hspace{+0.5mm}'s are computed using
    a Morse function with $3$ critical points together with the associated cone Morse cochain complex \cite[Definition 1.2]{tangtsengclausensymplecticwitten}. By the counting formula (\ref{main result main formula}), we can also use these $3$ critical points of this perfect Morse function to find   $\kappa(M,\omega) = 1$. 
\end{example}

\begin{example}\normalfont \label{weak replacement of the euler char}
\normalfont
    Let $M = \mathbb{S}^2\times\mathbb{S}^2$ equipped with the standard symplectic structure. 
    Recall that we have a height function $h$ (see \cite[Example 3.4]{banyaga2013lectures}) on $\mathbb{S}^2$ with $2$ critical points. Then, 
    \begin{align*}
    f: \mathbb{S}^2\times\mathbb{S}^2&\to\mathbb{R}\\
    (p,q)&\mapsto h(p)+h(q)
    \end{align*}
     is a Morse function on $\mathbb{S}^2$ with $4$ critical points. Thus, in this case, 
    $\kappa(M,\omega) = 0$.

    As we know, the Euler characteristic of the de Rham cohomology of $\mathbb{S}^2\times\mathbb{S}^2$ is $4$, meaning that $\mathbb{S}^2\times\mathbb{S}^2$ does not admit a nonvanishing vector field. However, as we see, its symplectic semi-characteristic is $0$. Thus, in terms of judging the existence of nonvanishing vector fields, the symplectic semi-characteristic (\ref{symplectic semi definition we need it}) of the primitive cohomology is a weak substitute of the Euler characteristic of the de Rham cohomology. 
\end{example}

\begin{example}\normalfont
    According to \cite[Section 3.4]{tty1st} and \cite[(5.3)]{tanaka_tseng_2018}, we let $\sim$ be the identification 
   $$(x_1,x_2,x_3,x_4)\sim (x_1 + a, x_2+b, x_3+c, x_4+d - bx_3) \text{\ \ (when $a,b,c,d\in\mathbb{Z}$)}$$
    on $\mathbb{R}^4$. Then, the Kodaira-Thurston four-fold is equal to $\mathbb{R}^4/\sim$. Let $M = \mathbb{R}^4/\sim$ equipped with the symplectic form $\omega$ given in \cite[(3.26)]{tty1st} and \cite[(5.4)]{tanaka_tseng_2018}. On the one hand, we can immediately see $\kappa(M,\omega) = 0$ from the tables of primitive cohomology groups provided in \cite[Section 3.4]{tty1st} and \cite[Section 5.4]{tanaka_tseng_2018}. On the other hand, since $\mathbb{R}^4/\sim$ has a globally defined tangent vector field $\partial_{x_1}$, we also obtain $\kappa(M,\omega) = 0$ according to Corollary \ref{vanishing theorem}. 
\end{example}

\begin{example}\label{counter example 2}
\normalfont
 We mention a little bit about the $(4n+2)$-dimensional case. For example, we let $M = \mathbb{T}^2$ equipped with the standard symplectic form. Since $\mathbb{T}^2$ is K\"ahler, we use the formula \cite[(4.4)]{tangtsengclausensymplecticwitten} to find 
 $b_0^\omega = 1, b_2^\omega = 2$, 
 and then $\kappa(M,\omega) = 1$. 
 
 However, we know there is a height function \cite[Part I, Section 1]{milnor1963morse} with $4$ nondegenerate critical points on $\mathbb{T}^2$. This means our Theorem \ref{counting formula theorem} does not apply to the $(4n+2)$-dimensional case. 
   The same thing happens to $\mathbb{S}^2$ on which we have the height function \cite[Example 3.4]{banyaga2013lectures}. Thus, we still need more subtle investigation into the $(4n+2)$-dimensional case.
\end{example}

We now propose some discussions for the extensions of this project. The deformation replaces $d+d^*$ by $d+d^*+T\hat{c}(V)$ and does not perturb $\omega$. This preserves the symplectic information and relates $\kappa(M,\omega)$ to the primitive forms given by the Lefschetz decomposition. However, as $\kappa(M,\omega)$ is unchanged when replacing $\omega$ by another symplectic form, it is also natural to consider replacing $\omega$ by $\omega\wedge\cdots\wedge\omega$ or by other forms. 

The $\omega\wedge\cdots\wedge\omega$ gives the semi-characteristic of the $1$-filtered cohomology \cite{tanaka_tseng_2018, tty3rd}. In a follow-up work \cite{follow_up_1}, we use $\omega\wedge\omega$ on a $(4n+2)$-dimensional closed symplectic manifold. Then, the associated semi-characteristic vanishes, which is exactly the parity of the de Rham Euler characteristic. This provides more evidence that $\kappa$ relies more on $M$ than on the form. 

Also, from an index-theoretic perspective without involving the primitive cohomology, we may perturb $\omega$ into $s\omega$ and let $s$ change from $0$ to $1$. The study could thus be done for any closed orientable manifold equipped with a closed homogeneous form. For this case, we will give the formula and the analysis in a follow-up work \cite{follow_up_2} with assumptions on both $\dim M$ and the degree of the form. A similar perturbation using $s^{-1}$ instead of $s$ are carried out in a recent work \cite{morse_recent_mapping_cone} on the mapping cone Morse theory for any closed oriented manifold equipped with a closed homogeneous smooth form. This $s^{-1}$ preserves the cohomology. 

Finally, we briefly discuss the $K$-theoretic background of this study. By \cite[Theorem 2.3]{atiyah_singer_1971index_V}, the mod 2 index is equivalent to the map 
\begin{align}\label{index map mod 2}
    KO^{-1}(TM)\to KO^{-1}(\text{point}) \cong \mathbb{Z}_2.
\end{align}
When $\dim M = 4n$, we have a skew-adjoint elliptic operator (\ref{the witten deformation of the skew adjoint operator constructed above and use later}) whose skew-symbol class is in $KO^{-1}(TM)$ and then mapped to $\kappa(M,\omega)$. In the $(4n+2)$-dimensional case, if we continue the current pattern of construction, we cannot obtain a skew-adjoint elliptic operator on $\Omega^\even(M)\oplus\Omega^\odd(M)$ whose skew-symbol class is mapped to $\kappa(M,\omega)$. Thus, in the $4n+2$ case, it could be worthwhile to study the geometric meaning of the image of the skew-symbol class of (\ref{before witten def}) under (\ref{index map mod 2}) (cf. \cite[Theorem 3.2]{zhangcountingmod2indexkervairesemi}), and whether (\ref{index map mod 2}) can be generalized to give the $(4n+2)$-dimensional case a $KO$-valued index result of the semi-characteristic. 

\bibliographystyle{abbrv}
\bibliography{mybib.bib}
\end{document}